\newtheorem{Theorem}{Theorem}[section]
\newtheorem{Lemma}[Theorem]{Lemma}
\newtheorem{Proposition}[Theorem]{Proposition}
\theoremstyle{definition}
\newcommand {\R} {\mathbb{R}}
\newtheorem{Remark}[Theorem]{Remark}
\numberwithin{equation}{section}
\title[nonlocal inverse problems]{The Calder\'{o}n problem for nonlocal operators}
\author[Ghosh, Uhlmann]{Tuhin Ghosh$^{*}$, Gunther Uhlmann$^{**}$.}
\address {$*$\quad Department of Mathematics, Universt\"{a}t Bielefeld.
	\newline
	\indent\: 
	E-mail:{\tt\  tghosh@math.uni-bielefeld.de}}
\address{$**$\quad Institute for Advanced Study, Hong Kong University of Science and Technology, and Department of Mathematics, University of Washington.
	\newline
	\indent\: \
	E-mail:{\tt \  gunther@math.washington.edu }}
\date{\today}
\begin{document}
	\begin{abstract}
		We study the inverse problem of determining the coefficients of the fractional power of a general second order elliptic operator given in the exterior of an open subset of the Euclidean space. We show the problem can be reduced into determining the coefficients from the boundary Cauchy data of the elliptic operator on the open set, the Calder\'{o}n problem. As a corollary we establish several new results for nonlocal inverse problems by using the corresponding results for the local inverse problems. In particular the isotropic nonlocal Calder\'{o}n problem can be resolved completely, assuming some regularity assumptions on the coefficients,  and the anisotropic  Calder\'{o}n problem modulo an isometry which is the identity at the boundary for real-analytic anisotropic conductivities in dimension greater than two and  bounded and measurable anisotropic conductivities in two dimensions.
	\end{abstract}
	\maketitle
	\section{Introduction and Statement of the Results}
	In this article, we consider a nonlocal analogue of the classical Calder\'{o}n problem, introduced in \cite{Calderon1980}. The classical Calder\'{o}n problem, also known as electrical impedance tomography (EIT), uses voltage and current measurements at the boundary to determine the conductivity in the interior. 
	
	In mathematical terms, we consider the conductivity equation, assuming no sinks or sources of current, in a smooth bounded domain $\Omega\subseteq\mathbb{R}^n$
	$$-div(\gamma\nabla)u=0\quad\mbox{in }\Omega.$$ 
	The question is  whether making
	votages and current measurements at the boundary i.e. the 
	\textit{boundary Cauchy data} $\mathcal{C}_\gamma =(u|_{\partial\Omega},\gamma\partial_\nu u|_{\partial\Omega})$
	determines the  conductivity $\gamma$ in $\Omega$.
	
	 If one assumes the conductivity is isotropic and sufficiently smooth then do recover it in the interior from the voltage current measurements at the boundary. See section 4 for the precise regularity assumptions. If the conductivity is anisotropic, then the unique recovery fails, there is a gauge invariance. We refer readers to the survey article \cite{UG} for more details and Section \ref{sec4} of this paper.
	
	The fractional Schr\"{o}dinger equation exterior Cauchy data problem was  studied in \cite{GSU}
	\[\left((-\Delta)^a + q\right)v = 0\quad\mbox{in } \Omega, \quad supp\, v\subseteq \overline{\Omega}\cup \overline{W},\quad \overline{\Omega}\cap \overline{W}=\emptyset.\]
	Here $W$ denotes an open set. The fractional Laplacian is defined by 
	\begin{equation}\label{ffl}
	(-\Delta)^av = \mathscr{F}^{-1}\{|\xi|^{2a}\widehat{v}(\xi)\},
	\end{equation} where $\widehat{v}=\mathscr{F}v$ is the Fourier transform of $v$. It is a nonlocal operator since it does not preserve the support of $v$. 
	
	\noindent In \cite{GSU} it was shown, that one can uniquely recover the potential $q$ in $\Omega$ from the exterior measurements of the nolocal Cauchy data $\left(v|_{W}, (-\Delta)^av|_{\widetilde{W}}\right)$,  where $W,\widetilde{W}$ are non-empty open subsets of the exterior domain  $\Omega_e:=\mathbb{R}^n\setminus\overline{\Omega}$. 
	
	In this article, we are interested in the inverse problem of recovering the inhomogeneous nonlocal operator from the exterior Cauchy data. For instance, let us consider the  nonlocal equation  
	\[\left(-div(\gamma\nabla)\right)^au=0 \quad\mbox{in }\Omega,\quad 0<a<1.\] We address the question among several others of  whether the \textit{exterior Cauchy data} $\mathcal{C}^a_\gamma$ (cf. \eqref{eq:pc}) measured in the exterior domain $\Omega_e$, determines $\gamma$ in  $\Omega$.   
	
	
We formulate the problem for positive definite, selfadjoint second order elliptic operators.	 Let $\mathcal{L}$ be a second-order elliptic partial differential operator of the form
\begin{equation}\label{lopera}
\mathcal{L}:=-\sum_{jk=1}^n \frac{\partial}{\partial x_j}a_{jk}(x)\frac{\partial}{\partial x_k} - \mathrm{i}\sum_{j=1}^n\left(\frac{\partial}{\partial x_j}b_j(x) + b_j(x)\frac{\partial}{\partial x_j}\right) +c(x), \quad x\in\mathbb{R}^n
\end{equation}
where  $\mathrm{i} = (-1)^{1/2}$ and the coefficients $a_{jk}, b_j$ and $c$ are real-valued functions defined on $\mathbb{R}^n$. 
We assume that $\mathcal{L}$ is a self-adjoint, positive definite operator,
densely defined in $L^{2}(\mathbb{R}^{n})$. 
We consider the fractional
operator $\mathcal{L}^a$ in $\mathbb{R}^n$ with $a\in(0,1)$, and consider the nonlocal \textit{Calder\'{o}n}  problem associated to the fractional
operator $\mathcal{L}^{a}$ in a bounded domain $\Omega \subset\mathbb{R}^n$, We would like to determine the unknown coefficients $a_{jk}(x), b_j(x), c(x)$ from the exterior measurements of the \textit{Cauchy data} of the nonlocal elliptic equation $\mathcal{L}^{a}u=0$ in $\Omega$.  

  We briefly mention here that the fractional $\mathcal{L}^a$ can be defined using the  heat semi-group $\{e^{-t\mathcal{L}}\}_{t\geq 0}$ as 
  \begin{equation}\label{defLa}
  \mathcal{L}^a=\frac{1}{\Gamma(-a)}\int_{0}^{\infty}\frac{\left(e^{-t\mathcal{L}}-\mbox{Id}\right)}{t^{1+a}}\,dt
  \end{equation}
  where $\Gamma$ denotes the Gamma function.  It acts as a  bounded linear operator from  $H^a(\mathbb{R}^{n})$ to $H^{-a}(\mathbb{R}^{n})$ with its domain of definition $H^{2a}(\mathbb{R}^n)$, we refer Section \ref{sec2} for the details. For example, if $\mathcal{L}=(-\Delta)$, then the definition \eqref{defLa} coincides with the Fourier one \eqref{ffl} introduced above.  A similar definition was used in \cite{GLX} for the case that $\mathcal{L}$ has no first order terms.\\
\\
\paragraph{\bf Nonlocal equation}
 Let us consider
$u\in H^a(\mathbb{R}^{n})$ be a solution to the nonlocal Dirichlet problem
\begin{align}
\label{eq:nonlocalproblem}
\begin{split}
\mathcal{L}^au & = 0 \mbox{ in } \Omega,\\
supp\, u & \subseteq \overline{\Omega}\cup\overline{W}. 
\end{split}
\end{align}
\paragraph{\bf Nonlocal Cauchy data}
We define the nonlocal exterior \textit{partial Cauchy data} 
 $\mathcal{C}_{(W,\widetilde{W})}\subset H^a(W)\times H^{-a}(\widetilde{W})$ of the solution $u$ of \eqref{eq:nonlocalproblem} as 
	\begin{equation}\label{eq:pc} \mathcal{C}_{(W, \widetilde{W})} = \{ u|_{W},\, \mathcal{L}^au|_{\widetilde{W}}\}.
	\end{equation}
\\
At this point we introduce the problem we study here:   
\paragraph{\bf (A) Nonlocal inverse (exterior value) problem}
Does the exterior Cauchy data 
$\mathcal{C}_{(W, \widetilde{W})}$ determine 
 the coefficients $a_{jk},\, b_j,\, c$ uniquely in $\Omega$?\\
\\  
Let us recall the analogous local inverse problem here.
\paragraph{\bf (B) Local inverse (boundary value) problem} Is it possible to determine the coefficients $a_{jk},\, b_j,\, c$ of $\mathcal{L}$  in $\Omega$ from the associated boundary Cauchy data
\begin{equation}\label{eq:bc} \mathcal{C}_{\partial\Omega} = \{ v|_{\partial\Omega},\, \partial_\nu v|_{\partial\Omega}\}
\end{equation}
where $\nu= \sum_{j=1}^na_{jk}\nu_j$ is the usual co-normal vector on $\partial\Omega$, and
 $v\in H^1(\Omega)$ be the solution of the equation 
\begin{align}
\label{eq:localproblem}
\begin{split}
\mathcal{L}v & = 0 \mbox{ in } \Omega.
\end{split}
\end{align}
In this article, we prove that the above nonlocal inverse problem (A) can be reduced to the local inverse problem (B). Therefore, all the know results in local inverse problem (B) can be applied to solve the nonlocal inverse problem (A).

Let us now state precisely  our main result here. We assume that $\mathcal{L}$ in \eqref{lopera} for $n\geq2$, where $A(x)=(a_{jk}(x))$, $x\in\mathbb{R}^n$ is an $n\times n$ symmetric matrix satisfying the followings.
\begin{equation}\label{eq:ellipticity and symmetry condition}
\begin{cases}
a_{jk}=a_{kj}\mbox{ for all }1\leq j,k\leq n,\mbox{ and }\\
\Lambda^{-1}|\xi|^{2}\leq\sum_{j,k=1}^{n}a_{jk}(x)\xi_{i}\xi_{j}\leq\Lambda|\xi|^{2}\mbox{ for all }x,\xi\in\mathbb{R}^{n},\mbox{ for some }\Lambda>0
\end{cases}
\end{equation}
 and
\begin{equation}\label{eq:condi_bc} b_j\in W^{1,\infty}(\mathbb{R}^n)\cap\mathcal{E}^\prime(\overline{\Omega}) \mbox{ and }c\in L^\infty(\mathbb{R}^n)\cap\mathcal{E}^\prime(\overline{\Omega})
\end{equation}
such that $\mathcal{L}$ remains as self-adjoint positive definite operator in $L^2(\mathbb{R}^n)$ with its domain of definition $H^{2}(\mathbb{R}^n)$.  
Here is our main theorem. 
\begin{Theorem}\label{thm1}
Let $\mathcal{L}^{(1)}$, $\mathcal{L}^{(2)}$ be two  self-adjoint, positive definite, second-order elliptic differential operators as in \eqref{lopera} with the coefficients $a^{(l)}_{jk}$ satisfying \eqref{eq:ellipticity and symmetry condition}, and $b^{(l)}_j,\, c^{(l)}$ satisfying \eqref{eq:condi_bc} for $l=1,2$ respectively. 
	 Let $\Omega\subset \R^n,\, n\geq 2$ be some bounded non-empty open set. We assume that, $a^{(1)}_{jk}=a^{(2)}_{jk}=\delta_{jk}$ while restricted in $\Omega_e$, where $\delta_{jk}$ denotes the Kronecker delta. Let $W,\, \widetilde{W}\subset\Omega_e$ be two non-empty open subsets. Suppose that the exterior partial Cauchy data defined in \eqref{eq:pc} are same, i.e. $\mathcal{C}^{(1)}_{(W, \widetilde{W})}=\mathcal{C}^{(2)}_{(W, \widetilde{W})}$, for the two sets of solutions $\{u^{(1)}\in H^a(\R^n): (\mathcal{L}^{(1)})^a\,u^{(1)}=0 \mbox{ in } \Omega,\,supp\, u^{(1)}  \subseteq \overline{\Omega}\cup\overline{W},\,0<a<1\}$ and $\{u^{(2)}\in H^a(\R^n): (\mathcal{L}^{(2)})^a\,u^{(2)}=0 \mbox{ in } \Omega,\,supp\, u^{(2)}  \subseteq \overline{\Omega}\cup\overline{W}, \,0<a<1\}$.	
	Then the boundary Cauchy data defined in \eqref{eq:bc} are same, i.e. $\mathcal{C}^{(1)}_{\partial\Omega}=\mathcal{C}^{(2)}_{\partial\Omega}$, for the two sets of solutions $\{v^{(1)}\in H^1(\Omega): \mathcal{L}^{(1)}\,v^{(1)}=0 \mbox{ in } \Omega\}$ and $\{v^{(2)}\in H^1(\Omega): \mathcal{L}^{(2)}\,v^{(2)}=0 \mbox{ in } \Omega\}$.	
	\end{Theorem}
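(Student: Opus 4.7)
The plan is to derive $\mathcal{C}^{(1)}_{\partial\Omega}=\mathcal{C}^{(2)}_{\partial\Omega}$ by combining three ingredients: (i) an Alessandrini-type integration-by-parts identity that converts the hypothesis on exterior Cauchy data into an interior statement, (ii) a Runge approximation property for the fractional operator $\mathcal{L}^a$, and (iii) the classical Green identity for $\mathcal{L}^{(l)}$ on $\Omega$. The guiding principle is that nonlocal solutions to $(\mathcal{L}^{(l)})^a u=0$ with exterior support in $\overline{\Omega}\cup\overline{W}$ are, thanks to the global nature of $\mathcal{L}^a$, ``flexible'' enough in the interior to approximate arbitrary solutions of the local equation $\mathcal{L}^{(l)}v=0$, so the nonlocal DN information descends to local DN information.

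First I would exploit the self-adjointness of $\mathcal{L}^{(l)}$, and hence of $(\mathcal{L}^{(l)})^a$, to rewrite the hypothesis $\mathcal{C}^{(1)}_{(W,\widetilde{W})}=\mathcal{C}^{(2)}_{(W,\widetilde{W})}$ as an integral identity: for any admissible pair $u^{(1)}, u^{(2)}$,
\[
\bigl\langle (\mathcal{L}^{(1)})^a u^{(1)},\,u^{(2)}\bigr\rangle_{\R^n}=\bigl\langle u^{(1)},\,(\mathcal{L}^{(2)})^a u^{(2)}\bigr\rangle_{\R^n},
\]
in which only contributions from $\widetilde{W}$ (left) and $W$ (right) survive, and these cancel term by term by hypothesis. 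Next I would establish a Runge-type density result in the spirit of Ghosh--Salo--Uhlmann: for every $v^{(l)}\in H^1(\Omega)$ with $\mathcal{L}^{(l)}v^{(l)}=0$ in $\Omega$, there exists a sequence $\{u_k^{(l)}\}$ with $(\mathcal{L}^{(l)})^a u_k^{(l)}=0$ in $\Omega$ and $\supp u_k^{(l)}\subset\overline{\Omega}\cup\overline{W}$ such that $u_k^{(l)}|_{\Omega}$ approximates $v^{(l)}$ in a topology strong enough to control boundary traces. The density is obtained by Hahn--Banach together with a strong unique continuation principle for $\mathcal{L}^a$, which in turn should follow from a Caffarelli--Silvestre-type extension realizing $\mathcal{L}^a$ as a Dirichlet-to-Neumann map for a degenerate elliptic equation in $\R^{n+1}_+$. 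Inserting the approximants into the nonlocal identity and invoking the classical Green identity for $\mathcal{L}^{(l)}$ in $\Omega$ collapses the identity to
\[
\int_{\partial\Omega}\bigl(v^{(1)}\,\partial_\nu v^{(2)}-v^{(2)}\,\partial_\nu v^{(1)}\bigr)\,d\sigma=0
\]
for arbitrary local solutions $v^{(l)}$, which is precisely $\mathcal{C}^{(1)}_{\partial\Omega}=\mathcal{C}^{(2)}_{\partial\Omega}$.

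The main obstacle will be the Runge approximation step. Two nontrivial ingredients are required: a strong unique continuation principle for the variable-coefficient operator $\mathcal{L}^a$ (the natural route is a Carleman estimate at the boundary of the Caffarelli--Silvestre extension), and a topology of approximation strong enough that not only $u_k^{(l)}|_{\Omega}\to v^{(l)}$ but also the traces of $u_k^{(l)}$ and their co-normal derivatives on $\partial\Omega$ converge to those of $v^{(l)}$. This last point is delicate because $u_k^{(l)}$ satisfies the nonlocal equation, not the local one, inside $\Omega$; passing from interior $L^2$-density to control at $\partial\Omega$ will require elliptic regularity for $\mathcal{L}^{(l)}$ together with a careful bookkeeping of how the $(\mathcal{L}^{(l)})^a$-harmonicity of the approximants influences their behavior near $\partial\Omega$.
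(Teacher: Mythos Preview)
Your approach has a genuine gap at the step where you claim the nonlocal Alessandrini identity ``collapses'' to the boundary integral $\int_{\partial\Omega}(v^{(1)}\partial_\nu v^{(2)} - v^{(2)}\partial_\nu v^{(1)})\,d\sigma$. What the hypothesis together with self-adjointness gives you is $B_1(u^{(1)},u^{(2)}) = B_2(u^{(1)},u^{(2)})$, where $B_l$ is the bilinear form of $(\mathcal{L}^{(l)})^a$, i.e.\ a double integral over $\R^n\times\R^n$ against the kernel $\mathcal{K}^{(l)}_a$. There is no integration-by-parts identity relating this quantity to a boundary integral for the \emph{local} operator $\mathcal{L}^{(l)}$: the passage from $\mathcal{L}^a$ to $\mathcal{L}$ is spectral/functional calculus, not Green's formula. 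Even if you achieve $u_k^{(l)}|_\Omega \to v^{(l)}$ in $H^1(\Omega)$, the approximants do not satisfy $\mathcal{L}^{(l)}u_k^{(l)}=0$ in $\Omega$, so Green's identity for $\mathcal{L}^{(l)}$ applied to them carries uncontrolled interior volume terms, and the nonlocal bilinear forms above do not converge to anything supported on $\partial\Omega$. In short, the Alessandrini--Runge machinery that recovers a \emph{potential} in the fractional Schr\"odinger problem does not, by itself, produce local Cauchy data when the principal part is unknown.

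The paper takes a different route that manufactures genuine local solutions from the nonlocal data. Given $u_f^{(l)}$ solving the nonlocal problem, set $\Phi^{(l)}=\int_0^\infty e^{-t\mathcal{L}^{(l)}}u_f^{(l)}\,dt=(\mathcal{L}^{(l)})^{-1}u_f^{(l)}$ and $\Psi^{(l)}=(\mathcal{L}^{(l)})^a\Phi^{(l)}$. Then $\mathcal{L}^{(l)}\Psi^{(l)}=(\mathcal{L}^{(l)})^a u_f^{(l)}=0$ in $\Omega$, so $\Psi^{(l)}$ is an honest local solution. The crux is a heat-equation argument (using $\mathcal{L}^{(1)}=\mathcal{L}^{(2)}=-\Delta$ in $\Omega_e$ and a moment/Fourier trick on the semigroup integral) showing that the hypothesis forces $e^{-t\mathcal{L}^{(1)}}u_f^{(1)}=e^{-t\mathcal{L}^{(2)}}u_f^{(2)}$ in $\Omega_e$ for all $t>0$; this yields $\Psi^{(1)}=\Psi^{(2)}$ in $\Omega_e$ and hence equality of their local Cauchy data on $\partial\Omega$. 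A density argument---of the $\Psi^{(l)}$'s inside the space of local solutions, the reverse of your proposed Runge direction---then gives the full conclusion. The missing idea in your plan is precisely this construction of $\Psi^{(l)}$ via the heat semigroup, which is what bridges the nonlocal and local problems.
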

\subsection{Corollaries of Theorem \ref{thm1}}
We have reduced our nonlocal inverse problem into solving the local  inverse problem. The known results for the well studied local cases can be recalled here. We begin with the anisotropic case. Anisotropic conductivities depend on direction. Muscle tissue in the human body is
an important example of an anisotropic conductor. For instance cardiac muscle has a
conductivity of 2.3 mho in the transverse direction and 6.3 in the longitudinal direction.
\subsection*{Anisotropic case}
Let us consider $\mathcal{L}$ in \eqref{lopera} as
\begin{equation}\label{gm12}
\mathcal{L}_0:=-\sum_{jk=1}^n \frac{\partial}{\partial x_j}a_{jk}(x)\frac{\partial}{\partial x_k}, \qquad x\in\mathbb{R}^n
\end{equation} 
where $A(x)=(a_{jk}(x))$, $x\in\mathbb{R}^n$ is an $n\times n$ symmetric matrix satisfying the ellipticity and boundedness  criteria given in \eqref{eq:ellipticity and symmetry condition}.
Let $\Omega$ as in Theorem \ref{thm1}.  We assume that, $a_{jk}$ are smooth functions on $\Omega$ and $a_{jk}=\delta_{jk}$  in $\Omega_e$. 
 By Theorem \ref{thm1}, it reduces into the following inverse problem that whether one can determine $A(x)$ in $\Omega$ by knowing the boundary Cauchy data 
 \[\mathcal{C}^A_{\partial\Omega} = \Big\{ v|_{\partial\Omega},\, \sum_{j=1}^n\nu_ja_{jk}\frac{\partial v}{\partial x_k}\big|_{\partial\Omega}\Big\}
 \]
 where $v\in H^1(\Omega)$ be the solution of the anisotropic conductivity equation 
 \[
 \sum_{jk=1}^n \frac{\partial}{\partial x_j}\left(a_{jk}(x)\frac{\partial}{\partial x_k}v\right)=0 \quad\mbox{in }\Omega.
 \]
Unfortunately, $\mathcal{C}^A_{\partial\Omega}$ does not determine $A$ in $\Omega$ uniquely. This observation is due to L. Tartar (see \cite{KVC} for instance). Let 
\[\mathbb{F}: \overline{\Omega} \mapsto \overline{\Omega}\] be a $C^\infty$ diffeomorphism with $\mathbb{F}|_{\partial\Omega} =Id$
where Id denotes the identity map. We define the push forward of $A$ as
\begin{equation}\label{Atilda} \mathbb{F}_{*}A= \Big(\frac{(D\mathbb{F})^\top\circ A \circ (D\mathbb{F})}{|{\rm det }(D\mathbb{F})|}\Big)\circ\mathbb{F}^{-1},  \end{equation}
where $D\mathbb{F}$ denotes the (matrix) differential of $\mathbb{F}$, $(D\mathbb{F})^\top$ its transpose and the composition in \eqref{Atilda} is to be interpreted as multiplication of matrices. Then we have
\[ \mathcal{C}^A_{\partial\Omega}= \mathcal{C}^{\mathbb{F}_{*}A}_{\partial\Omega}.\]
This shows we have then a large
number of conductivities with the same Cauchy data any change of variables of $\Omega$ that
leaves the boundary fixed gives rise to a new conductivity with the same electrostatic boundary measurements. The question is then whether this is the only obstruction to unique identifiability of the conductivity. 

In two dimensions this has been shown
for $L^\infty(\Omega)$ conductivities in \cite{APL}. This is done by reducing the anisotropic problem
to the isotropic one by using isothermal coordinates \cite{SYL90} and using the Astala and
P\"{a}iv\"{a}rinta's result in the isotropic case \cite{AP}. 
Here is our result in two dimension.
\begin{Theorem}\label{thm20}
	Let $n=2$.	Let $\mathcal{L}^{(1)}_0$, $\mathcal{L}^{(2)}_0$ are as in \eqref{gm12} with $A^{(l)}(x)=(a^{(l)}_{jk}(x))$ satisfying \eqref{eq:ellipticity and symmetry condition} for $l=1,2$ respectively. Let $\Omega, W,\, \widetilde{W}$ are as in Theorem \ref{thm1}.  We assume that, $a_{jk}^{(1)}=a_{jk}^{(2)}=\delta_{jk}$  while restricted in $\Omega_e$. Suppose  $\mathcal{C}^{(1)}_{(W, \widetilde{W})}=\mathcal{C}^{(2)}_{(W, \widetilde{W})}$ for the two sets of solutions $\{u^{(1)}\in H^a(\R^n): (\mathcal{L}_0^{(1)})^a\,u^{(1)}=0 \mbox{ in } \Omega,\, supp\, u^{(1)}  \subseteq \overline{\Omega}\cup\overline{W},\,0<a<1\}$ and $\{u^{(2)}\in H^a(\R^n): (\mathcal{L}_0^{(2)})^a\,u^{(2)}=0 \mbox{ in } \Omega,\,supp\, u^{(2)}  \subseteq \overline{\Omega}\cup\overline{W}, \,0<a<1\}$. 	
	Then there exists a smooth, invertible map $\mathbb{F}:\overline{\Omega}\mapsto\overline{\Omega}$, with ${\rm det}(D\mathbb{F})(x)$, ${\rm det}(D\mathbb{F}^{-1})(x) \ge C >0 $ in $\Omega$, and $\mathbb{F} = I$ on $\partial\Omega$, such that 
	\[A^{(2)}=\mathbb{F}_{*}A^{(1)} \mbox{ in } \Omega.\]
\end{Theorem}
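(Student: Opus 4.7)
The proof follows the two-step strategy that Theorem \ref{thm1} was designed to enable. First, observe that the operators $\mathcal{L}_0^{(l)}$ in \eqref{gm12} are special cases of \eqref{lopera} with $b_j^{(l)} = 0$ and $c^{(l)} = 0$, which trivially satisfy \eqref{eq:condi_bc}. The ellipticity and symmetry of $A^{(l)}$ guarantee that each $\mathcal{L}_0^{(l)}$ is self-adjoint and positive definite on $L^2(\mathbb{R}^n)$ with domain $H^2(\mathbb{R}^n)$, and the assumption $a_{jk}^{(l)} = \delta_{jk}$ in $\Omega_e$ ensures that the two operators agree outside $\Omega$, so that the hypotheses of Theorem \ref{thm1} are met.

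Applying Theorem \ref{thm1} to the assumed equality of the exterior partial Cauchy data $\mathcal{C}^{(1)}_{(W,\widetilde{W})} = \mathcal{C}^{(2)}_{(W,\widetilde{W})}$ yields that the associated boundary Cauchy data agree, namely
\[
\mathcal{C}^{A^{(1)}}_{\partial\Omega} = \mathcal{C}^{A^{(2)}}_{\partial\Omega},
\]
where $\mathcal{C}^{A^{(l)}}_{\partial\Omega}$ is the Cauchy data for the anisotropic conductivity equation $\sum_{jk}\partial_j(a_{jk}^{(l)}\partial_k v) = 0$ in $\Omega$. In other words, the Dirichlet-to-Neumann maps associated with the two anisotropic conductivities $A^{(1)}$ and $A^{(2)}$ coincide.

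At this point the problem reduces entirely to the classical local two-dimensional anisotropic Calder\'on problem. Since $A^{(1)}, A^{(2)}$ are bounded and uniformly elliptic symmetric matrices on the planar domain $\Omega$, we invoke the result of Astala--Lassas--P\"aiv\"arinta \cite{APL}, which asserts that two $L^\infty$ anisotropic conductivities on a planar domain producing the same boundary Cauchy data must agree up to a diffeomorphism fixing the boundary. This produces a homeomorphism $\mathbb{F}\colon\overline{\Omega}\to\overline{\Omega}$ with $\mathbb{F}|_{\partial\Omega} = \mathrm{Id}$ and with $\mathbb{F}$, $\mathbb{F}^{-1}$ smooth (since $A^{(l)}$ are assumed smooth) together with the uniform lower bounds on $\det(D\mathbb{F})$ and $\det(D\mathbb{F}^{-1})$, such that $A^{(2)} = \mathbb{F}_{*}A^{(1)}$ in $\Omega$, which is the desired conclusion.

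The only real content of the argument is the reduction provided by Theorem \ref{thm1}; once the local Cauchy data are shown to coincide, there is no obstacle beyond citing \cite{APL}. The main point to check carefully is that the hypotheses of Theorem \ref{thm1} are genuinely satisfied here — in particular that the matching of $A^{(1)}$ and $A^{(2)}$ in $\Omega_e$ with the identity matrix both allows the reduction and is compatible with the normalization $\mathbb{F}|_{\partial\Omega} = \mathrm{Id}$ coming from the push-forward gauge \eqref{Atilda}.
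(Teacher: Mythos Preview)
Your proposal is correct and follows precisely the route the paper takes: verify that $\mathcal{L}_0^{(l)}$ fit the hypotheses of Theorem~\ref{thm1}, apply Theorem~\ref{thm1} to conclude $\mathcal{C}^{A^{(1)}}_{\partial\Omega}=\mathcal{C}^{A^{(2)}}_{\partial\Omega}$, and then invoke \cite{APL} for the two-dimensional anisotropic local problem. The paper treats Theorem~\ref{thm20} exactly as such a corollary, with no additional argument beyond the reduction and the citation.
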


\subsection*{Recovering a Riemannian metric}
In three dimensions, as was pointed out in \cite{LU89}, this is a problem of geometrical nature and makes sense for general compact Riemannian manifolds with boundary. Let $(M, g)$ be a compact Riemannian manifold
with boundary. The Laplace-Beltrami operator associated to the metric $g$ is given in
local coordinates by
\begin{equation}\label{Dg} (-\Delta_g) =- \frac{1}{\sqrt{g}}\sum_{j,k=1}^n\frac{\partial}{\partial x_j}\left(\sqrt{g}g^{jk}\frac{\partial}{\partial x_k}\right) \end{equation}
where $(g^{jk})$ is the matrix inverse of the matrix $(g_{jk})$.

The inverse problem is whether one can recover $g$ from the boundary Cauchy data 
\[
\mathcal{C}^g_{\partial M} = \Big\{ v|_{\partial M},\, \sum_{j=1}^n\nu_j\,g^{jk}\frac{\partial v}{\partial x_k}\,\sqrt{det\,g}\,\big|_{\partial M}\Big\}
\]
where $v\in H^1(M)$ be the solution of 
\[
(-\Delta_g)v=0 \quad\mbox{in }M.
\]
We have
\begin{equation}\label{gM}
 \mathcal{C}^g_{\partial M} =\mathcal{C}^{\mathbb{F}^{*}g}_{\partial M}
\end{equation}
where $\mathbb{F}$ is any $C^\infty$ diffeomorphism of $M$ which is the identity on the boundary. As
usual $\mathbb{F}^{\ast}g$ denotes the pull back of the metric $g$ by the diffeomorphism $\mathbb{F}$. 

In the case that $M$ (say $M=\Omega$) is an open, bounded subset of $\mathbb{R}^n$  with smooth boundary, then it is easy to see (\cite{LU89}) that for $n \geq 3$ the anisotropic Calder\'{o}n problem and the problem of recovering the metric from the Cauchy data are equivalent. Namely, we have
\[
\mathcal{C}^A_{\partial M} = \mathcal{C}^g_{\partial M}
\]
where
\[
g_{jk} = (det\, A)^{1/(n-2)}\,a_{jk}^{-1},\quad a_{jk} = (det\, g)^{1/2}\,g_{jk}^{-1}.
\]
 Lassas and Uhlmann (\cite{LUrm}, see also \cite{LTU}) proved
that \eqref{gM} is the only obstruction to unique identifiability of the conductivity for real-analytic manifolds in dimension $n \geq 3$. 
\subsection{$a$-harmonic functions}
Let us consider the Riemannian manifold $(\mathbb{R}^n,g)$ with the metric $g$ as
\begin{equation}\label{rm}g_{jk} = (det\, A)^{1/(n-2)}a_{jk}^{-1}\quad\mbox{ in }\mathbb{R}^n.\end{equation}
Then the operator $\mathcal{L}_0$ in \eqref{gm12} is the Laplace-Beltrami operator $(-\Delta_g)$ (cf. \eqref{Dg}) in $(\mathbb{R}^n,g)$. Let $\Omega, W, \, \widetilde{W}$ are as in Theorem \ref{thm1}.  Let us consider the $a$-harmonic functions 
$u\in H^a(\mathbb{R}^{n})$ as
\begin{align}
\label{eq:nonlocalproblem_g}
\begin{split}
(-\Delta_g)^au & = 0 \mbox{ in } \Omega,\\
supp\, u & \subseteq \overline{\Omega}\cup\overline{W}. 
\end{split}
\end{align}
We define the exterior (partial) \textit{Cauchy data} 
$\mathcal{C}_{(W, \widetilde{W})}\subset H^a(W)\times H^{-a}(\widetilde{W})$ as 
\begin{equation}\label{eq:pc_g} \mathcal{C}_{(W, \widetilde{W})} = \{ u|_{W},\, (-\Delta_g)^au|_{\widetilde{W}}\}.
\end{equation}
Here we state our result. 
\begin{Theorem}\label{thm2}
		Let $n\geq 3$. Let $(\R^n, g^{(l)})$, $l=1,2$ be two Riemannian manifolds defined as in \eqref{rm}. Let $\Omega, W, \widetilde{W}$ are as in Theorem \ref{thm1}. We assume that, $g^{(1)}=g^{(2)}=g_0$ while restricted in $\Omega_e$, where $(\R^n,g_0)$ stands for the standard Euclidean space. We further assume $(\Omega,g^{(l)})$, $l=1,2$ are real analytic, and connected. Suppose that the exterior partial Cauchy data defined in \eqref{eq:pc_g} are the same i.e. $\mathcal{C}^{(1)}_{(W, \widetilde{W})}=\mathcal{C}^{(2)}_{(W, \widetilde{W})}$, for the two sets of $a$-harmonic functions $\{u\in H^a(\R^n): (-\Delta_{g^{(1)}})^au=0 \mbox{ in } \Omega\}$ and $\{v\in H^a(\R^n): (-\Delta_{g^{(2)}})^av=0 \mbox{ in } \Omega\}$, $0<a<1$, corresponding to the two different metrics $g^{(1)},g^{(2)}$ respectively.
	Then there exists a real analytic, invertible map $\mathbb{F}:\overline{\Omega}\mapsto\overline{\Omega}$, with ${\rm det}(D\mathbb{F})(x)$, ${\rm det}(D\mathbb{F}^{-1})(x) \ge C >0 $ in $\Omega$, and $\mathbb{F} = I$ on  $\partial\Omega$, such that $g^{(2)}=\mathbb{F}^{*}g^{(1)}$ in $\Omega$.
\end{Theorem}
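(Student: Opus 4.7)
The strategy is to apply Theorem \ref{thm1} to reduce the exterior hypothesis to equality of the boundary Cauchy data for the associated conductivity equations in $\Omega$, and then to invoke the Lassas--Uhlmann theorem for real-analytic Riemannian manifolds in dimension $n \ge 3$, exactly mirroring the scheme already outlined in the paper.

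The first step is to realize each Laplace--Beltrami operator $-\Delta_{g^{(l)}}$ in the divergence form \eqref{gm12} covered by Theorem \ref{thm1}. Using the correspondence $a_{jk} = (\det g)^{1/2} g^{jk}$ recalled in the paper, one associates to each $g^{(l)}$ the symmetric matrix field $A^{(l)} = (a_{jk}^{(l)})$, and the $a$-harmonic functions of $(-\Delta_{g^{(l)}})^a$ coincide, after matching conventions between the two heat semigroups, with those of the fractional power of the divergence-form operator $\mathcal{L}_0^{(l)} = -\sum_{j,k=1}^n \frac{\partial}{\partial x_j}\left(a_{jk}^{(l)}\frac{\partial}{\partial x_k}\cdot\right)$. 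Since $g^{(1)} = g^{(2)} = g_0$ in $\Omega_e$, the matrices $A^{(l)}$ reduce to the identity there, so the exterior hypothesis of Theorem \ref{thm1} is satisfied. Applying Theorem \ref{thm1} then converts the equality of exterior Cauchy data $\mathcal{C}^{(1)}_{(W,\widetilde{W})} = \mathcal{C}^{(2)}_{(W,\widetilde{W})}$ into the identity
\[\mathcal{C}^{A^{(1)}}_{\partial\Omega} = \mathcal{C}^{A^{(2)}}_{\partial\Omega}\]
of boundary Cauchy data for the local anisotropic conductivity problems in $\Omega$.

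The second step is classical: by the equivalence recalled in the paper between the anisotropic Calder\'on problem and the problem of recovering a Riemannian metric in $n \ge 3$, the previous identity is equivalent to $\mathcal{C}^{g^{(1)}}_{\partial\Omega} = \mathcal{C}^{g^{(2)}}_{\partial\Omega}$. Since $(\Omega, g^{(l)})$ is connected and real-analytic for each $l$, and the two metrics agree with the Euclidean metric in a neighbourhood of $\partial\Omega$, the Lassas--Uhlmann theorem \cite{LUrm,LTU} produces a real-analytic diffeomorphism $\mathbb{F}: \overline{\Omega} \to \overline{\Omega}$ with $\mathbb{F}|_{\partial\Omega} = \mathrm{Id}$ and $g^{(2)} = \mathbb{F}^{*} g^{(1)}$ on $\Omega$. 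The uniform lower bounds on $\det(D\mathbb{F})$ and $\det(D\mathbb{F}^{-1})$ over the compact set $\overline{\Omega}$ follow automatically from $\mathbb{F}$ being a smooth diffeomorphism.

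The only genuinely delicate point is the identification in the first step between $(-\Delta_{g^{(l)}})^a$ and $(\mathcal{L}_0^{(l)})^a$: the Laplace--Beltrami operator is naturally self-adjoint in the weighted space $L^2(\mathbb{R}^n, \sqrt{\det g^{(l)}}\, dx)$, whereas $\mathcal{L}_0^{(l)}$ in \eqref{gm12} is self-adjoint in unweighted $L^2(\mathbb{R}^n, dx)$, so their heat semigroups, and hence their fractional powers defined by \eqref{defLa}, differ by the Jacobian factor. One must either absorb this weight into the definition \eqref{eq:pc_g} of the nonlocal Cauchy data, or reinterpret Theorem \ref{thm1} on the weighted $L^2$ space and verify that its proof carries over unchanged. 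Once this bookkeeping is settled, Theorem \ref{thm2} is essentially the composition of Theorem \ref{thm1} with the Lassas--Uhlmann uniqueness result.
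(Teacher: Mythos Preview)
Your proposal is correct and follows essentially the same route as the paper: Theorem~\ref{thm2} is presented there as a corollary of Theorem~\ref{thm1} combined with the Lassas--Uhlmann result \cite{LUrm,LTU}, with no further argument given. The weighted-versus-unweighted $L^2$ issue you flag is real---strictly speaking $\mathcal{L}_0 = (\det g)^{1/2}(-\Delta_g)$ rather than $\mathcal{L}_0 = -\Delta_g$---and the paper glosses over it in the sentence before \eqref{eq:nonlocalproblem_g}; your observation that the bookkeeping must be done either in the Cauchy-data definition or by rerunning Theorem~\ref{thm1} on the weighted space is a genuine refinement of the paper's treatment.
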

At this point we ask, similar to the local case, do we always have the invariance through change of variable in nonlocal inverse problem? The answer turns out to be yes.  In nonlocal case  also we can not except the full recovery of the metric, and it is subjected to the invariance under the change of variables by globally ($\mathbb{R}^n$) defined diffeomorphisms which are the identity in the exterior. This is known as transformation optics. We postpone the discussion on this topic to Subsection \ref{top} and refer Theorem \ref{thm6} for the precise statement.      
\subsection*{Isotropic case}
Let us turn into more specific case by considering $A^{(l)}$, $l=1,2$ are isotropic matrices, i.e. $(A^{(l)})_{jk}=a^{(l)}\delta_{jk}$. Further with the right regularity assumptions on $a^{(l)}\big|_{\Omega}$ while $a^{(l)}\big|_{\Omega_e}=1$, we have the desired uniqueness of $a^{(1)}=a^{(2)}$. In two dimension, as we have mentioned above the work of \cite{AP} gives the result  
for bounded conductivities, so in particular \eqref{eq:ellipticity and symmetry condition} is enough to have the following uniqueness result. 
\begin{Theorem}\label{thm30}
	Let $n=2$. Let $A^{(1)}, A^{(2)}$ are isotropic matrices in Theorem \ref{thm20}. Then $\mathbb{F}$ becomes the identity in $\Omega$, that $A^{(1)}=A^{(2)}$ in $\Omega$. 
\end{Theorem}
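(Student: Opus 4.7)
The plan is to combine Theorem~\ref{thm1} with the two-dimensional isotropic Calder\'on theorem of Astala and P\"aiv\"arinta~\cite{AP}; no new analysis beyond these two inputs is required.

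First, I would check that $\mathcal{L}_0^{(1)}, \mathcal{L}_0^{(2)}$ fall under the hypotheses of Theorem~\ref{thm1}: they have the form \eqref{lopera} with $b_j^{(l)}=0$ and $c^{(l)}=0$, which trivially meet \eqref{eq:condi_bc}, and by hypothesis $a_{jk}^{(l)}=\delta_{jk}$ in $\Omega_e$. The ellipticity assumption \eqref{eq:ellipticity and symmetry condition} guarantees that each $\mathcal{L}_0^{(l)}$ is selfadjoint and positive definite on $L^{2}(\mathbb{R}^n)$. Theorem~\ref{thm1} then upgrades the hypothesis $\mathcal{C}^{(1)}_{(W,\widetilde{W})}=\mathcal{C}^{(2)}_{(W,\widetilde{W})}$ to the equality of local boundary Cauchy data, $\mathcal{C}^{(1)}_{\partial\Omega}=\mathcal{C}^{(2)}_{\partial\Omega}$, now associated with the isotropic conductivity equation $\mathrm{div}(a^{(l)}\nabla v^{(l)})=0$ in $\Omega$, where $A^{(l)}=a^{(l)}I_{2}$.

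In two dimensions, the theorem of Astala and P\"aiv\"arinta~\cite{AP} asserts that the boundary Dirichlet-to-Neumann map (equivalently, the boundary Cauchy data) uniquely determines any $L^\infty$ isotropic conductivity satisfying \eqref{eq:ellipticity and symmetry condition}. Applied to $a^{(1)}$ and $a^{(2)}$, this immediately gives $a^{(1)}=a^{(2)}$ in $\Omega$, and hence $A^{(1)}=A^{(2)}$ in $\Omega$; the diffeomorphism $\mathbb{F}$ supplied by Theorem~\ref{thm20} can therefore be taken to be the identity on $\overline{\Omega}$. The only genuine step that deserves comment is verifying the hypotheses of Theorem~\ref{thm1}, which is immediate here; the two deep inputs do all the remaining work, so there is no substantive obstacle.
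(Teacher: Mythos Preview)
Your proposal is correct and follows essentially the same route as the paper: reduce the nonlocal exterior data to local boundary Cauchy data via Theorem~\ref{thm1}, then invoke the Astala--P\"aiv\"arinta uniqueness theorem~\cite{AP} for bounded isotropic conductivities in two dimensions to conclude $a^{(1)}=a^{(2)}$ in $\Omega$. The paper treats this corollary in exactly this way, with the discussion preceding the statement of Theorem~\ref{thm30} serving as the proof.
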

In three and higher dimensions, we present our result with $g^{(l)}\big|_{\Omega}\in C^2(\overline{\Omega})$ as first resolved by Sylvester and Uhlmann in \cite{SU}. With several intermediate improvements (see \cite{BT03,GLU2003,PPU03}) few years ago this has been extended to $C^1$ regularity by Haberman and Tataru in \cite{HT13} and later into critical (with respect to unique continuation) $W^{1,n}$ space by Haberman in \cite{HB15} for $n=3,4$. 
\begin{Theorem}\label{thm3}
	Let $n\geq 3$. Let $g^{(1)},g^{(2)}$ are isotropic metrics in Theorem \ref{thm2}. We further assume $g^{(l)}\big|_{\overline{\Omega}}\in C^2(\overline{\Omega})$. Then $\mathbb{F}$ becomes the identity in $\Omega$, that $g^{(1)}=g^{(2)}$ in $\overline{\Omega}$. 
\end{Theorem}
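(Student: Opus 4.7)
The plan is a clean two-step reduction: first use the main theorem of this paper (Theorem~\ref{thm1}) to pass from the exterior Cauchy data of $(-\Delta_{g^{(l)}})^{a}$ to the boundary Cauchy data of $-\Delta_{g^{(l)}}$ on $\Omega$; then invoke the classical Sylvester--Uhlmann uniqueness theorem~\cite{SU} for the isotropic Calder\'on problem, which produces $g^{(1)} = g^{(2)}$ outright, with no gauge ambiguity, forcing the diffeomorphism $\mathbb{F}$ of Theorem~\ref{thm2} to be the identity.

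Concretely, in the isotropic case $(A^{(l)})_{jk} = a^{(l)}\delta_{jk}$, the defining relation~\eqref{rm} yields the conformally Euclidean metric
\[
g^{(l)}_{jk} = (a^{(l)})^{2/(n-2)} \delta_{jk}, \qquad n\geq 3,
\]
so the Laplace--Beltrami operator $-\Delta_{g^{(l)}}$ coincides with the divergence-form conductivity operator $\mathcal{L}_{0}^{(l)} = -\nabla\cdot(a^{(l)}\nabla\,\cdot)$, which is precisely the form~\eqref{gm12} with no first-order or potential terms. The hypothesis $g^{(l)} = g_0$ in $\Omega_e$ gives $a^{(l)} \equiv 1$ in $\Omega_e$, so the matrices $A^{(l)}$ indeed reduce to $\delta_{jk}$ on $\Omega_e$ as required by Theorem~\ref{thm1}. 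The $C^2$ assumption on $g^{(l)}|_{\overline{\Omega}}$ transfers to $a^{(l)}\in C^{2}(\overline{\Omega})$, and the ellipticity~\eqref{eq:ellipticity and symmetry condition} follows since $a^{(l)}$ is continuous and strictly positive on $\overline{\Omega}$. Applying Theorem~\ref{thm1} therefore yields equality of the boundary Cauchy data,
\[
\mathcal{C}^{a^{(1)}}_{\partial\Omega} \;=\; \mathcal{C}^{a^{(2)}}_{\partial\Omega},
\]
which is equivalent to the equality of the Dirichlet-to-Neumann maps of the two isotropic conductivity equations $\nabla\cdot(a^{(l)}\nabla v^{(l)}) = 0$ in $\Omega$.

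At this point the result is a direct citation: Sylvester and Uhlmann~\cite{SU} prove that for $C^2$ isotropic conductivities in $n\geq 3$ with matching boundary values (here automatically equal to $1$, since $a^{(l)} = 1$ on $\partial\Omega$ by the matching to the Euclidean metric on $\Omega_e$), equality of the Dirichlet-to-Neumann maps implies $a^{(1)} = a^{(2)}$ in $\overline{\Omega}$. Hence $g^{(1)} = g^{(2)}$ in $\overline{\Omega}$ as tensors, and the diffeomorphism $\mathbb{F}$ supplied by Theorem~\ref{thm2} can be taken to be the identity on $\Omega$, which is the claim.

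The substantive work is therefore entirely contained in Theorem~\ref{thm1}, and there is no genuine obstacle in the present corollary; the only non-trivial bookkeeping is verifying that the isotropic Laplace--Beltrami operator slots correctly into the divergence-form hypotheses of Theorem~\ref{thm1} (in particular checking $A^{(l)}|_{\Omega_e} = \delta_{jk}$ from $g^{(l)}|_{\Omega_e} = g_0$), and then invoking the appropriate isotropic Calder\'on result at the stated $C^2$ regularity level. The remark about replacing $C^2$ by $C^1$ or by $W^{1,n}$ (for $n=3,4$) is then an immediate substitution of \cite{HT13,HB15} for \cite{SU} in Step~2.
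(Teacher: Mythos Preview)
Your proposal is correct and follows essentially the same approach as the paper: reduce via Theorem~\ref{thm1} to equality of the boundary Cauchy data for the local isotropic conductivity equation, then cite Sylvester--Uhlmann~\cite{SU} (with the $C^1$ and $W^{1,n}$ improvements noted as alternatives). The paper treats this result as an immediate corollary and gives no further detail, so your explicit verification that the isotropic Laplace--Beltrami operator fits the hypotheses of Theorem~\ref{thm1} is a welcome elaboration rather than a deviation.
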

 \subsection*{Recovering the lower order terms}
Let us consider $\mathcal{L}$ in \eqref{lopera} as
\begin{equation}\label{msch}
\mathcal{L}_1:=-\Delta - \mathrm{i}\sum_{j=1}^n\left(\frac{\partial}{\partial x_j}b_j(x) + b_j(x)\frac{\partial}{\partial x_j}\right) +c(x), \quad x\in\mathbb{R}^n
\end{equation}
where the coefficients $b_j, c$ satisfy \eqref{eq:condi_bc}. 
Since \cite{SUN1, SUN2}, inverse boundary value problems for
first order perturbations of the Laplacian have been extensively studied, usually
in the context of magnetic Schr\"{o}dinger operators, see \cite{NSU, CNS, KU14}.
\begin{Theorem}\label{thm4}
		Let $\mathcal{L}^{(1)}_1$, $\mathcal{L}^{(2)}_1$ are as in \eqref{msch} with $b^{(l)}_j,\, c^{(l)}$ satisfying \eqref{eq:condi_bc} for $l=1,2$ respectively. Let $\Omega, W,\, \widetilde{W}$ are as in Theorem \ref{thm1}. Suppose  $\mathcal{C}^{(1)}_{(W, \widetilde{W})}=\mathcal{C}^{(2)}_{(W, \widetilde{W})}$ for the two sets of solutions $\{u^{(1)}\in H^a(\R^n): (\mathcal{L}_1^{(1)})^a\,u^{(1)}=0 \mbox{ in } \Omega,\, supp\, u^{(1)}  \subseteq \overline{\Omega}\cup\overline{W},\,0<a<1\}$ and $\{u^{(2)}\in H^a(\R^n): (\mathcal{L}_1^{(2)})^a\,u^{(2)}=0 \mbox{ in } \Omega,\,supp\, u^{(2)}  \subseteq \overline{\Omega}\cup\overline{W}, \,0<a<1\}$.	
		Then $Curl\, b^{(1)}= Curl\, b^{(2)}$  and $c^{(1)}-(b^{(1)})^2 = c^{(2)}-(b^{(2)})^2$ in $\Omega$. 
	\end{Theorem}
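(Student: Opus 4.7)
The proof reduces to Theorem~\ref{thm1} combined with the classical inverse boundary value theorem for the magnetic Schr\"odinger operator.

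First I would check that the hypotheses of Theorem~\ref{thm1} are satisfied for the pair $\mathcal{L}_1^{(1)},\mathcal{L}_1^{(2)}$. The principal parts are both $-\Delta$, so $a^{(1)}_{jk}=a^{(2)}_{jk}=\delta_{jk}$ globally on $\R^n$, and in particular in $\Omega_e$. The condition \eqref{eq:condi_bc} on $b_j^{(l)},c^{(l)}$ is assumed, and these are the coefficients to which Theorem~\ref{thm1} is applied. Positivity and self-adjointness of $\mathcal{L}_1^{(l)}$ on $L^2(\R^n)$ with domain $H^2(\R^n)$ hold by assumption. Applying Theorem~\ref{thm1}, the hypothesis $\mathcal{C}^{(1)}_{(W,\widetilde{W})}=\mathcal{C}^{(2)}_{(W,\widetilde{W})}$ yields equality of the boundary Cauchy data
\[
\mathcal{C}^{(1)}_{\partial\Omega}=\mathcal{C}^{(2)}_{\partial\Omega}
\]
for the two local magnetic Schr\"odinger operators $\mathcal{L}_1^{(l)}$ on $\Omega$.

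Second I would invoke the classical inverse boundary value result for the magnetic Schr\"odinger equation, due to Nakamura--Sun--Uhlmann \cite{NSU} with subsequent refinements in \cite{CNS, KU14}. These results assert that the boundary Cauchy data (equivalently, the Dirichlet-to-Neumann map) of the operator $-\Delta-\mathrm{i}\sum_j(\partial_j b_j+b_j\partial_j)+c$ determine the magnetic field $\mathrm{Curl}\,b$ and the gauge-invariant electric potential $c-b^{2}$ uniquely in $\Omega$. The gauge invariance underlying this statement is the identity $e^{-\mathrm{i}\varphi}\mathcal{L}_1 e^{\mathrm{i}\varphi}=-\Delta-\mathrm{i}\sum_j(\partial_j(b_j+\partial_j\varphi)+(b_j+\partial_j\varphi)\partial_j)+c$ for $\varphi\in C^\infty_c(\overline\Omega)$ with $\varphi|_{\partial\Omega}=0$, which preserves $\mathcal{C}_{\partial\Omega}$, shifts $b\mapsto b+\nabla\varphi$, and leaves $\mathrm{Curl}\,b$ and $c-b^{2}$ invariant. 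Applied to our pair, this yields exactly the two claimed identities $\mathrm{Curl}\,b^{(1)}=\mathrm{Curl}\,b^{(2)}$ and $c^{(1)}-(b^{(1)})^{2}=c^{(2)}-(b^{(2)})^{2}$ in $\Omega$.

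The main obstacle is Theorem~\ref{thm1} itself, which is the deep nonlocal-to-local reduction in this paper; once that is granted, Theorem~\ref{thm4} is a clean corollary requiring no new analytic input. The regularity \eqref{eq:condi_bc} (namely $b_j\in W^{1,\infty}$, $c\in L^\infty$, both compactly supported in $\overline\Omega$) is comfortably within the range of the cited magnetic Schr\"odinger uniqueness theorems, so the quoted local results apply directly without any further technical adjustment.
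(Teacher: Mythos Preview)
Your proposal is correct and follows exactly the paper's approach: Theorem~\ref{thm4} is presented as a direct corollary of Theorem~\ref{thm1}, combined with the known local uniqueness results for the magnetic Schr\"odinger operator \cite{SUN1, SUN2, NSU, CNS, KU14}. The paper gives no separate proof beyond this reduction, so there is nothing to add.
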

	As a  particular case that if $\mathcal{L}$ in \eqref{lopera} has only the zeroth order term, say \begin{equation}\label{poten}
	\mathcal{L}_2:=-\Delta +c(x), \quad x\in\mathbb{R}^n
	\end{equation}
	where the coefficients $c$ satisfy \eqref{eq:condi_bc}. Then we have the following result. There are numerous work has been done centered around the Schr\"{o}dinger equation, see \cite{UG} for a survey.
	\begin{Theorem}\label{thm5}
		Let $\mathcal{L}^{(1)}_2$, $\mathcal{L}^{(2)}_2$ are as in \eqref{poten} with $c^{(l)}$ satisfying \eqref{eq:condi_bc} for $l=1,2$ respectively. Let $\Omega, W,\, \widetilde{W}$ are as in Theorem \ref{thm1}, and suppose  $\mathcal{C}^{(1)}_{(W, \widetilde{W})}=\mathcal{C}^{(2)}_{(W, \widetilde{W})}$ for the two sets of solutions $\{u^{(1)}\in H^a(\R^n): (\mathcal{L}_2^{(1)})^a\,u^{(1)}=0 \mbox{ in } \Omega,\, supp\, u^{(1)}  \subseteq \overline{\Omega}\cup\overline{W},\,0<a<1\}$ and $\{u^{(2)}\in H^a(\R^n): (\mathcal{L}_2^{(2)})^a\,u^{(2)}=0 \mbox{ in } \Omega,\,supp\, u^{(2)}  \subseteq \overline{\Omega}\cup\overline{W}, \,0<a<1\}$.	
		Then $c^{(1)}= c^{(2)}$ in $\Omega$. 
	\end{Theorem}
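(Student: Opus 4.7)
The plan is to use Theorem \ref{thm1} to transfer the hypothesis on exterior Cauchy data to an equality of boundary Cauchy data for the local Schr\"odinger operator $-\Delta + c$, and then to invoke the classical uniqueness theorem for the Calder\'on problem with a bounded potential.

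First I would verify that $\mathcal{L}^{(l)}_2 = -\Delta + c^{(l)}$ fits the framework of Theorem \ref{thm1}. The principal part is the Laplacian, so $a^{(l)}_{jk}(x) = \delta_{jk}$ for every $x \in \mathbb{R}^n$ and in particular $a^{(1)}_{jk} = a^{(2)}_{jk} = \delta_{jk}$ on $\Omega_e$; the first order coefficients $b^{(l)}_j$ are identically zero; and $c^{(l)} \in L^{\infty}(\mathbb{R}^n) \cap \mathcal{E}^{\prime}(\overline{\Omega})$ by hypothesis, so \eqref{eq:condi_bc} holds. The self-adjointness and positive definiteness of $\mathcal{L}^{(l)}_2$ on $L^2(\mathbb{R}^n)$ is built into the standing assumptions of the paper. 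Therefore Theorem \ref{thm1} applies and converts the hypothesis $\mathcal{C}^{(1)}_{(W,\widetilde{W})} = \mathcal{C}^{(2)}_{(W,\widetilde{W})}$ into the equality
\[
 \mathcal{C}^{(1)}_{\partial\Omega} = \mathcal{C}^{(2)}_{\partial\Omega}
\]
for the pairs $(v|_{\partial\Omega}, \partial_\nu v|_{\partial\Omega})$ associated with solutions of the local Dirichlet problems $(-\Delta + c^{(l)}) v^{(l)} = 0$ in $\Omega$. Equivalently, the Dirichlet-to-Neumann maps $\Lambda_{c^{(1)}}$ and $\Lambda_{c^{(2)}}$ on $\partial\Omega$ coincide.

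Second, I would invoke the classical uniqueness theorem for the Calder\'on problem with $L^{\infty}$ potential: in dimension $n \geq 3$ this is the Sylvester--Uhlmann result obtained via complex geometrical optics solutions, and in dimension $n=2$ it is Bukhgeim's theorem based on $\overline{\partial}$-methods, both discussed in the survey \cite{UG}. Either way, equality of the Dirichlet-to-Neumann maps forces $c^{(1)} = c^{(2)}$ in $\Omega$. An equivalent and even shorter route is to apply Theorem \ref{thm4} directly with $b^{(l)}_j \equiv 0$: the conclusion $\operatorname{Curl} b^{(1)} = \operatorname{Curl} b^{(2)}$ is vacuous and the gauge-invariant identity $c^{(1)} - (b^{(1)})^2 = c^{(2)} - (b^{(2)})^2$ collapses to $c^{(1)} = c^{(2)}$.

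The main obstacle lies entirely in Theorem \ref{thm1}, which performs the nontrivial passage from nonlocal exterior measurements to local boundary Cauchy data; once it is in hand, the present theorem follows by a direct appeal to a well-established classical result with no further technicalities, as the support assumption $c^{(l)} \in \mathcal{E}^{\prime}(\overline{\Omega})$ ensures compatibility of the nonlocal setup with the standard hypotheses of the local Calder\'on theorem.
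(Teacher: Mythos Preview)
Your proposal is correct and matches the paper's approach exactly: the paper treats Theorem~\ref{thm5} as a direct corollary of Theorem~\ref{thm1} combined with the classical local uniqueness results for the Schr\"odinger operator surveyed in \cite{UG}, with no additional argument given. Your alternative route via Theorem~\ref{thm4} with $b_j^{(l)}\equiv 0$ is also valid and equally in the spirit of the paper's presentation.
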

\noindent
This completes the discussion of the corollaries of Theorem \ref{thm1} and their proofs.

	Finally, we would like to point out that the results obtained in this paper can
	be viewed as generalizations of the  fractional Calder\'{o}n problem which begins with the article \cite{GSU}, subsequent development includes results like low regularity and stability \cite{RS1,RS4}, matrix coefficients \cite{GLX}, variable coefficient \cite{COVI1}, reconstruction from single measurement \cite{GRSU}, shape detection \cite{HL2}, local and nonlocal lower order perturbation \cite{CLR, LiLi1, BGU}, recovery from the boundary response \cite{TG} etc.  See also the survey article \cite{SALS} and the references therein. These nonlocal problems are motivated by the various modeling ranging from diffusion process \cite{AFMR} to finance \cite{SCW}, image processing \cite{GGOS}, biology \cite{MAVE} etc. See \cite{BCVE, ROSS, JBRW, CGRA} for further references.

	This paper is organized as follows. In
	Section \ref{sec2} we review the functional framework namely the spectral theory and heat semigroup approach to define the fractional operators. Following that, we discuss the direct problem for the fractional equation and introduce the exterior Cauchy data.  In Section \ref{sec3}, we discuss the inverse problem and complete the proof of the Theorem \ref{thm1}. The proof is divided into several propositions and lemmas directed into establishing the claim. In the final Section \ref{sec4} we discuss the gauge invariance of the nonlocal inverse problem (cf. (A)) and establish similar to the local case in the nonlocal case also the unique recovery of the anisotropic matrices is subjected to the global change of variables or the transformation optics phenomena. Precise statement can be found in Theorem \ref{thm6}.

	We became aware of the preprint \cite{Ali}
	when the writing of this paper was being finished. This article
	considers the source to solution map associated with the fractional
	Laplace-Beltrami operator on a closed Riemannian manifold under some additional conditions.

	\subsection*{Acknowledgement}
	The research of T.G. is supported by the Collaborative Research Center, membership no. 1283, Universit\"{a}t Bielefeld. G.U. was partly supported by NSF a Walker Family Endowed Professorship at UW and a Si Yuan Professorship at IAS, HKUST. G.U. would like to thank Matti Lassas and Mikko Salo for the opportunity to present the results of this article in the conference  ``Inverse problems and nonlinearity'' in August, 2021. 

\section{Functional framework}\label{sec2}
\subsection{Spectral Theory $\&$ Heat-Semigroup \label{sec:SpectralTheory} }
Let us briefly present the spectral theory approach (\cite{Riesz,Rudin2}) to define the operator $\mathcal{L}^a$. 
Let $\mathcal{L}$ be any non-negative definite and self-adjoint operator
densely defined in $L^{2}(\mathbb{R}^{n})$. $L^2(\mathbb{R}^n)$ consists of square integrable complex-valued functions on $\mathbb{R}^n$ equipped with the inner product $\langle f, g\rangle = \int_{\mathbb{R}^n}f(x)\overline{g(x)}\,dx$. 
Let $\phi$ be a real-valued measurable function defined on the spectrum
of $\mathcal{L}$. Then one defines $\phi(\mathcal{L})$
be a self-adjoint operator in $L^{2}(\mathbb{R}^{n})$ as
\[
\phi(\mathcal{L}):=\int_{0}^{\infty}\phi(\lambda)\,dE_{\lambda},
\]
where $\{E_{\lambda}\}$ is the spectral resolution of $\mathcal{L}$
and each $E_{\lambda}$ is a projection in $L^{2}(\mathbb{R}^{n})$
(see for instance, \cite{grigoryan2009heat}). The domain of $\phi(\mathcal{L})$
is given by 
\begin{equation}\label{dfdm}
\mathrm{Dom}(\phi(\mathcal{L}))=\left\{ f\in L^{2}(\mathbb{R}^{n});\int_{0}^{\infty}|\phi(\lambda)|^{2}\,d\|E_{\lambda}f\|^{2}<\infty\right\} .
\end{equation}
The linear operator $\phi(\mathcal{L}):\mathrm{Dom}(\phi(\mathcal{L}))\rightarrow L^{2}(\mathbb{R}^{n})$
is understood, via the Riesz representation theorem, in the following
sense, 
\[
\left\langle \phi(\mathcal{L})f,g\right\rangle :=\int_{0}^{\infty}\phi(\lambda)\,d\langle E_{\lambda}f,g\rangle,\quad f\in\mathrm{Dom}(\phi(\mathcal{L})),\ g\in L^{2}(\mathbb{R}^{n}).
\]
Now we consider the particular case $\phi(\lambda)=\lambda^a$ in order to define the fractional operator $\mathcal{L}^a$, $a\in (0,1)$.
We also note that  \[\lambda^a=\dfrac{1}{\Gamma(-a)}\int_{0}^{\infty}(e^{-t\lambda}-1)t^{-1-a}\,dt, \quad a\in(0,1),\]
where $\Gamma(-a):=-\Gamma(1-a)/a$, and $\Gamma$
is the Gamma function. We have 
\begin{equation}\label{spec_heat}
\mathcal{L}^{a}:=\int_{0}^{\infty}\lambda^{a}\,dE_{\lambda}=\frac{1}{\Gamma(-a)}\int_{0}^{\infty}\left(e^{-t\mathcal{L}}-\mbox{Id}\right)\,\frac{dt}{t^{1+a}},\quad a\in (0,1)
\end{equation}
where $e^{-t\mathcal{L}}$ given by 
\begin{equation}\label{eq:heat-semigroup}
e^{-t\mathcal{L}}:=\int_{0}^{\infty}e^{-t\lambda}\,dE_{\lambda}
\end{equation}
is a bounded self-adjoint operator in $L^{2}(\mathbb{R}^{n})$ for
each $t\ge0$. The operator family $\{e^{-t\mathcal{L}}\}_{t\ge0}$
is called the heat semigroup associated with $\mathcal{L}$ (cf. \cite{Pazy}).
\subsection{Sobolev Spaces}
Let us introduce few spaces to work with. We follow the notations of \cite{McLean}. Let
$H^s(\mathbb{R}^n)$ denotes the fractional Sobolev space over $\mathbb{R}^n$ of order $s\in\mathbb{R}$:
\begin{equation}\label{hs}
 H^s(\R^n) := \{ u\in \mathcal{S}^\prime(\mathbb{R}^n)\,\, |\,\, (1+|\xi|^2)^{s/2}\widehat{u}(\xi)\in L^2(\mathbb{R}^n). \} \end{equation}
where 
$\mathcal{S}^\prime(\mathbb{R}^n)$ denotes the space of tempered distributions in $\mathbb{R}^n$, and $\widehat{\cdot}$ denotes the Fourier transform.  

\noindent
Let  $U\subset \R^n$ be an open set. We define
\begin{align*}
H^{s}(U)&:= \{u|_{U}: \, u \in H^{s}(\R^n)\},\\
\widetilde{H}^s(U)&:=\mbox{closure of $C_c^{\infty}(U)$ in $H^{s}(\R^n)$}.
\end{align*}
Let $s\in(0,1)$, we define
\begin{equation}\label{eq:NormHs}
\|u\|^2_{H^{s}(\mathbb{R}^{n})}:=\|u\|^2_{L^{2}(\mathbb{R}^{n})}+\int_{\R^n}\int_{\R^n}\frac{\left|u(x)-u(z)\right|^{2}}{|x-z|^{n+2s}}\,dx\,dz
\end{equation}
and
\[
\|u\|_{H^{s}(U)}:=\inf\left\{ \|w\|_{H^{s}(\mathbb{R}^{n})};\,w\in H^{s}(\mathbb{R}^{n})\mbox{ and }w|_{U}=u\right\} .
\]
\subsection{The fractional operator $\mathcal{L}^{a}$, $0<a<1$}
It is known that the 
 operator $\mathcal{L}$ introduced in  \eqref{lopera}-\eqref{eq:ellipticity and symmetry condition}-\eqref{eq:condi_bc} with the domain  
$\mathrm{Dom}(\mathcal{L})=H^{2}(\mathbb{R}^{n})$
is the maximal extension such that $\mathcal{L}$ is self-adjoint positive definite operator densely defined in $L^{2}(\mathbb{R}^{n})$. Moreover, by the definition in \eqref{dfdm}, it follows that $\mathrm{Dom}(\mathcal{L}^{a})=H^{2a}(\mathbb{R}^{n})$. Next, we would like to
extend the domain of definition of $\mathcal{L}^{a}$  to $H^{a}(\mathbb{R}^n)$, using heat
kernels and their estimates, in order to solve the direct problem \eqref{eq:nonlocalproblem}.

It is also known that for $\mathcal{L}$ satisfying \eqref{lopera}-\eqref{eq:ellipticity and symmetry condition}-\eqref{eq:condi_bc},
the bounded operator $e^{-t\mathcal{L}}$ given in \eqref{eq:heat-semigroup}
admits a symmetric (heat) kernel $p_{t}(x,z)$ (cf. \cite{GRAL}).
In other words, one has for any $t\in\mathbb{R}_{+}:=(0,\infty)$
and any $f\in L^{2}(\mathbb{R}^{n})$ that 
\begin{equation}\label{eq:hk}
\left(e^{-t\mathcal{L}}f\right)(x)=\int_{\mathbb{R}^{n}}p_{t}(x,z)f(z)\,dz,\quad x\in\mathbb{R}^{n}.
\end{equation}
Since we have assumed $\mathcal{L}$ (cf. \eqref{lopera}) is a positive definite operator i.e. Spec\, $\mathcal{L}\subset (0,\infty)$, so $\mathcal{L}$ can be compared with  $\widetilde{\mathcal{L}}=-\sum_{jk=1}^n \frac{\partial}{\partial x_j}\widetilde{a}_{jk}(x)\frac{\partial}{\partial x_k}$ in $\mathbb{R}^n$, where 
 $\widetilde{A}(x)=(\widetilde{a}_{jk}(x))$, $x\in\mathbb{R}^{n}$ is an $n\times n$
symmetric matrix satisfying the ellipticity condition \eqref{eq:ellipticity and symmetry condition}.
Then the (heat) kernel $p_{t}(\cdot,\cdot)$ (cf. \eqref{eq:hk}) for $\mathcal{L}$ admits the following estimates (see \cite{CoTh}) 
\begin{equation}\label{eq:pointwise estimates for p_t}
c_1\, \Big(\frac{1}{4\pi\,t}\Big)^{\frac{n}{2}} e^{-\frac{d_1\,|x-z|^2}{4t}}\leq p_t(x,z) \leq c_2\,\Big(\frac{1}{4\pi\,t}\Big)^{\frac{n}{2}} e^{-\frac{d_2\,|x-z|^2}{4t}}, \quad x, z\in\mathbb{R}^n
\end{equation}
for some $c_1,c_2, d_1,d_2>0$.

Then from \eqref{spec_heat} (see also \cite{CSt}) we write 
 for $f,g\in\mathrm{Dom}(\mathcal{L}^{a})$:  
\begin{equation}\label{eq:LsIntegral0}
\langle\mathcal{L}^{a}f,g\rangle=\frac{1}{2\Gamma(-a)}\int_{0}^{\infty}\int_{\mathbb{R}^{n}}\int_{\mathbb{R}^{n}}(f(x)-f(z))(g(x)-g(z))p_{t}(x,z)\,dx\,dz\,\frac{dt}{t^{1+a}}.
\end{equation}
Let us define
\begin{equation}\label{eq:kernel}
\mathcal{K}_{a}(x,z):=\frac{1}{\Gamma(-a)}\int_{0}^{\infty}p_{t}(x,z)\,\frac{dt}{t^{1+a}}.
\end{equation}
Thanks to \eqref{eq:pointwise estimates for p_t} it
enjoys the following pointwise estimate 
\begin{equation}\label{eq:pointwise estimate for kernel K}
\frac{C_{1}}{|x-z|^{n+2s}}\leq\mathcal{K}_{a}(x,z)=\mathcal{K}_{a}(z,x)\leq\dfrac{C_{2}}{|x-z|^{n+2s}},\quad x,z\in\mathbb{R}^{n},
\end{equation}
for $C_{1}, C_{2}>0$. Hence it is seen
by recalling the norm \eqref{eq:NormHs} of $H^{a}(\mathbb{R}^{n})$
that for any $f,g\in H^{s}(\mathbb{R}^{n})$, the right hand side
(RHS) of \eqref{eq:LsIntegral0} extends the definition of $\mathcal{L}^{a}$
from $\mathrm{Dom}(\mathcal{L}^{a})$ to $H^{a}(\mathbb{R}^{n})$
in the following distributional sense 
\begin{equation}\label{eq:integral represent for nonlocal}
\langle\mathcal{L}^{a}f,g\rangle:=\frac{1}{2}\int_{\mathbb{R}^{n}}\int_{\mathbb{R}^{n}}(f(x)-f(z))(g(x)-g(z))\mathcal{K}_{a}(x,z)\,dx\,dz
\end{equation}
with satisfying
\begin{equation}\label{eq:LsBoundedHs}
\left|\langle\mathcal{L}^{a}f,g\rangle\right|\le C\|f\|_{H^{a}(\mathbb{R}^{n})}\|g\|_{H^{a}(\mathbb{R}^{n})},\quad f,g\in H^{a}(\mathbb{R}^{n}).
\end{equation}
Thus, the definition \eqref{eq:integral represent for nonlocal} gives
a bounded linear operator 
\[
\mathcal{L}^{a}:H^{a}(\mathbb{R}^{n})\longrightarrow H^{-a}(\mathbb{R}^{n}).
\]
In particular, by simply using the symmetry $\mathcal{K}_{a}(x,z)=\mathcal{K}_{a}(z,x)$, one 
concludes $\mathcal{L}^{a}$ is self-adjoint, it is positive definite and given by
\begin{equation}\label{eq:PV for L^s}
\begin{split}\left(\mathcal{L}^{a}f\right)(x)= & \lim_{\epsilon\to0^{+}}\int_{|x-z|>\epsilon}(f(x)-f(z))\mathcal{K}_{a}(x,z)dz,\quad f\in H^{a}(\mathbb{R}^{n}).\end{split}
\end{equation}

\subsection{Dirichlet problems for $\mathcal{L}^{a}$}
Here we discuss the solvability of the direct problem \eqref{eq:nonlocalproblem}.
\subsection*{Well-Posedness}
Let $\Omega\subset \R^n$ be a bounded open set, and we denote $\Omega_e:=\R^n\setminus\overline{\Omega}$. 
Let $f\in H^a(\R^n)$ and  $u\in H^a(\R^n)$ be the solution of 
\begin{align}
\label{gm31}
\begin{split}
\mathcal{L}^au & = 0 \mbox{ in } \Omega,\\
u & = f \mbox{ in }\Omega_e.
\end{split}
\end{align}
The well-posedness of the above problem can be seen as follows: we define the bi-linear form $B: H^a(\R^n)\times H^a(\R^n)\mapsto \mathbb{C}$:
\[\begin{aligned}
B(u,w)&:=\langle \mathcal{L}^{a}u, w\rangle_{L^2(\R^n)} , \quad u,w \in H^{a}(\R^n).\\
&=\int_{\mathbb{R}^{n}}\int_{\mathbb{R}^{n}}(u(x)-u(z))(w(x)-w(z))\mathcal{K}_{a}(x,z)\,dx\,dz
\end{aligned}
\]
Then, for any $f\in H^a(\R^n)$ the problem \eqref{gm31} is well-posed in the sense that there exists a unique solution $u \in H^{a}(\R^n)$ with satisfying
\[
B(u,w) = 0 \mbox{ for all } w \in \widetilde{H}^a(\Omega),
\]
and $u-f \in \widetilde{H}^a(\Omega)$.
Moreover, there exists a constant $C>0$ independent of $u$ and $f$, such that
\begin{align*}
\|u\|_{H^{a}(\R^n)} \leq C \|f\|_{H^{a}(\R^n)}.
\end{align*}
The proof follows as in the works \cite{GLX,GSU}. 

Next we define the \textit{Cauchy data} $\mathcal{C}_{\Omega_e}$ in the exterior domain $\Omega_e$ as
\begin{equation}\label{cd}
\mathcal{C}_{\Omega_e} = \{ u|_{\Omega_e}, \mathcal{L}^au|_{\Omega_e}\}
\subset H^a(\Omega_e)\times H^{-a}(\Omega_e),\end{equation}
where $u$ solves \eqref{gm31}.

We also introduce the \textit{partial Cauchy data}.  Let  $ W,\, \widetilde{W} \subset \Omega_e$  be two non-empty open sets. Let us choose $f\in \widetilde{H}^a(W)$ in \eqref{gm31}, i.e. $supp\, u \subseteq \overline{\Omega}\cup \overline{W}$.  
We define the partial Cauchy data 
$\mathcal{C}_{(W, \widetilde{W})}$ as 
\begin{equation}\label{pc} \mathcal{C}_{(W, \widetilde{W})} = \{ u|_{W}, \mathcal{L}^au|_{\widetilde{W}}\} \subset H^a(W)\times H^{-a}(\widetilde{W}).
\end{equation}
\section{Inverse problems}\label{sec3}
 Let $\mathcal{L}^{(1)}$, $\mathcal{L}^{(2)}$ be two  self-adjoint, positive definite, second-order elliptic differential operators as in \eqref{lopera} with the coefficients $a^{(l)}_{jk}$ satisfying \eqref{eq:ellipticity and symmetry condition}, and $b^{(l)}_j,\, c^{(l)}$ satisfying \eqref{eq:condi_bc} for $l=1,2$ respectively. 
 Let $\Omega\subset \R^n,\, n\geq 2$ be some bounded non-empty open set. We assume that, $a^{(1)}_{jk}=a^{(2)}_{jk}=\delta_{jk}$ while restricted in $\Omega_e$. So what follows  $\mathcal{L}^{(1)}\Big|_{\Omega_e}=\mathcal{L}^{(2)}\Big|_{\Omega_e}=(-\Delta)$.  
   \subsection{Non-local inverse problem}\label{nlip}
   Let $f\in H^a(\R^n)$ and $u^{(l)}_f \in H^a(\R^n)$ be the unique solution of 
   \begin{equation}\label{fgk}\begin{cases} (\mathcal{L}^{(l)})^a\, u^{(l)}_f = 0 \mbox{ in }\Omega\\
   u^{(l)}_f=f \mbox{ in }\Omega_e.
   \end{cases}\qquad l=1,2, \quad 0<a<1
   \end{equation}
   Along our hypothesis in Theorem \ref{thm1}, let us assume
   \begin{equation}\label{L12}
   (\mathcal{L}^{(1)})^a\, u^{(1)}_f\Big|_{\widetilde{W}}=(\mathcal{L}^{(2)})^a\, u^{(2)}_f\Big|_{\widetilde{W}}
   \end{equation}
    for some non-empty open set $\widetilde{W}\subset\Omega_e$.

   Let us recall the heat kernel (cf. \eqref{eq:hk}) as $p^{(l)}_t(x,y)=H(t)p^{(l)}(x, y, t)$, where $H(\cdot)$ denotes the Heaviside function in $\R$, and $p(\cdot, \cdot, \cdot)$ solves
   \begin{equation}\label{hkernel2} \partial_tp^{(l)}(x,y,t) -\mathcal{L}^{(l)}p^{(l)}(x,y,t)=\delta(x-y,t) \quad\mbox{in }\R\times\R^n,\quad l=1,2 \end{equation}
   with satisfying (see \cite{Davies})
   \begin{equation}\begin{cases}\label{properties2} 
   p^{(l)}(\cdot,\cdot,\cdot) \in C^\infty (\R^n\times\R^n\times (0,\infty )),\,\, p^{(l)}(x,y,\cdot)=p^{(l)}(y,x,\cdot)\\[2pt]
   \int_{\R^n}p^{(l)}(x,y,t)\, dx =1, \quad t>0\\[2pt]
   p^{(l)}(x,y,t) \rightharpoonup \delta_y(x) \mbox{ as $t\to 0+$ in }\mathcal{D}^\prime(\R^n).
   \end{cases} \qquad l=1,2
   \end{equation}
   Let $u^{(l)}_f\in H^a(\mathbb{R}^n)$
   and 
   \[U^{(l)}= U^{(l)}(x,t)= \int_{\R^n}p^{(l)}_t(x,y)u^{(l)}_f(y)\,dy\,\,\, \in C\left([0, \infty); H^a(\R^n)\right)\] be the unique solution of
   \begin{equation}\label{kh}
   \begin{cases}
   \partial_t U^{(l)} = \mathcal{L}^{(l)}\, U^{(l)} \quad\mbox{in }\mathbb{R}^n\times (0,\infty)\\
   U^{(l)}\big|_{t=0} = u^{(l)}_f\quad\mbox{in }\mathbb{R}^n
   \end{cases}\qquad l=1,2
   \end{equation}
   with satisfying 
   \begin{equation}\label{Uesti} \|U^{(l)}(\cdot,t)\|_{H^a(\mathbb{R}^n)}\leq C\|u^{(l)}_f\|_{H^a(\mathbb{R}^n)},\end{equation}
   where $C>0$ independent of $t$ and $u^{(l)}_f$ (see \cite{Evans}).
    
   Then one has the following pointwise definition of the  fractional operator $(\mathcal{L}^{(l)})^a$, $\,0<a<1$ acting on $u^{(l)}_f$ as:  
   \begin{equation}\label{gm14}
   \forall x\in \mathbb{R}^n,\quad(\mathcal{L}^{(l)})^a u^{(l)}_f(x) := \frac{1}{\Gamma(-a)}\int_0^\infty \frac{U^{(l)}(x,t)-u^{(l)}_f(x)}{t^{1+a}}\, dt, \quad l=1,2.
   \end{equation}
   
\noindent   
Since from the nonlocal equation \eqref{fgk}  $u^{(1)}_f= u^{(2)}_f$ in $W\subset\Omega_e$, therefore \eqref{L12} and \eqref{gm14} imply that 
   \begin{equation}\label{iid}
   \forall x\in \widetilde{W},\quad \int_0^\infty \frac{U^{(1)}(x,t)-U^{(2)}(x,t)}{t^{1+a}}\, dt = 0.    
   \end{equation}
  Following that, we claim:
   \begin{Proposition}\label{propE}
   	Let $\Theta\Subset W\subset\Omega_e$ and $\Sigma\Subset \widetilde{W}\subset\Omega_e$  be some  non-empty bounded open subsets such that $\overline{\Sigma}\cap \overline{\Theta} =\emptyset$. Let $f\in C^\infty_c(\Theta)$. For any fixed pair of $U^{(1)}, U^{(2)}$ given by \eqref{kh}, the integral identity \eqref{iid} implies that $U^{(1)}=U^{(2)}$ in $[0,\infty)\times\Omega_e$.
   \end{Proposition}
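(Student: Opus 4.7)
Set $V(x,t) := U^{(1)}(x,t) - U^{(2)}(x,t)$. Because $u^{(l)}_f|_{\Omega_e} = f$ for $l=1,2$, we have $V(\cdot,0) = 0$ on $\Omega_e$; and since $\mathcal{L}^{(1)} = \mathcal{L}^{(2)} = -\Delta$ on $\Omega_e$, equation \eqref{kh} yields
\begin{equation*}
\partial_t V = \Delta V \quad \text{in } \Omega_e \times (0,\infty).
\end{equation*}
Positive self-adjointness of each $\mathcal{L}^{(l)}$ makes $t \mapsto U^{(l)}(x,t)$ holomorphic in $\{\operatorname{Re} t > 0\}$ with exponential decay as $t \to \infty$, so the same holds for $V(x,\cdot)$.

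The plan is to repackage \eqref{iid} as a Cauchy-data condition for a Caffarelli--Silvestre / Stinga--Torrea extension. Define
\begin{equation*}
W(x,y) := \frac{y^{2a}}{4^a\Gamma(a)} \int_0^\infty e^{-y^2/(4t)}\, V(x,t)\, \frac{dt}{t^{1+a}}, \qquad x \in \R^n,\ y > 0.
\end{equation*}
Differentiating under the integral and integrating by parts in $t$, using $\partial_t V = \Delta_x V$ in $\Omega_e \times (0,\infty)$, one checks
\begin{equation*}
\partial_y^2 W + \tfrac{1-2a}{y}\partial_y W + \Delta_x W = 0 \quad \text{in } \Omega_e \times (0,\infty).
\end{equation*}
Dominated convergence together with $V(\cdot,0)=0$ on $\Omega_e$ gives the Dirichlet trace $W(x,0^+) = 0$ for $x \in \Omega_e$; and the standard Stinga--Torrea trace identity gives, up to a nonzero multiplicative constant,
\begin{equation*}
\lim_{y\to 0^+} y^{1-2a}\partial_y W(x,y) \;\propto\; \int_0^\infty V(x,t)\, \frac{dt}{t^{1+a}},
\end{equation*}
which vanishes for $x \in \widetilde{W}$ by \eqref{iid}.

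Both the Dirichlet and the weighted Neumann traces of $W$ thus vanish on the open set $\widetilde{W}\times\{0\}$. Strong unique continuation at the degenerate boundary $y=0$ for the Muckenhoupt-weighted extension operator (the UCP established by R\"uland and used centrally in \cite{GSU}) forces $W \equiv 0$ on a neighborhood of $\widetilde{W}\times\{0\}$ in $\Omega_e \times [0,\infty)$. For $y>0$ the extension operator is uniformly elliptic with real-analytic coefficients, so interior unique continuation propagates the vanishing to the connected component of $\Omega_e \times (0,\infty)$ meeting $\widetilde{W}$; under the standing assumption that $\Omega_e$ is connected, this gives $W \equiv 0$ on all of $\Omega_e\times(0,\infty)$.

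Finally, the substitution $s = 1/t$ in the defining integral recasts $W(x,y)\equiv 0$ as
\begin{equation*}
\int_0^\infty e^{-(y^2/4)\,s}\, V(x,1/s)\, s^{a-1}\, ds \;=\; 0 \qquad (x \in \Omega_e,\ y > 0),
\end{equation*}
so the Laplace transform in $\eta := y^2/4$ of $s \mapsto V(x,1/s)\, s^{a-1}$ is identically zero. Injectivity of the Laplace transform and continuity of $V(x,\cdot)$ then give $V(x,t) = 0$ for all $t>0$ and $x \in \Omega_e$, and combined with $V(\cdot,0)=0$ this proves $U^{(1)} \equiv U^{(2)}$ on $[0,\infty)\times\Omega_e$. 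The main obstacle is exactly the passage from the single $t$-moment identity \eqref{iid} to the full pointwise vanishing of $V$; the Stinga--Torrea extension is precisely what converts \eqref{iid} into a boundary Cauchy condition for a degenerate elliptic PDE, where strong UCP becomes applicable.
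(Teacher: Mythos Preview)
Your argument is correct and follows a genuinely different route from the paper. The paper proceeds by a \emph{moment method}: it applies $(-\Delta)^{m+1}$ under the integral \eqref{iid}, uses $\partial_t U=\Delta U$ in $\Sigma\times(0,\infty)$, and integrates by parts in $t$ to obtain $\int_0^\infty U(x,t)\,t^{-(m+a)}\,dt=0$ for every $m\in\mathbb N$ and $x\in\Sigma$; summing the Taylor series of $e^{i\eta/t}$ (justified by the Gaussian heat-kernel bounds \eqref{klm3}, since $\overline\Sigma\cap\overline\Theta=\emptyset$) turns this into vanishing of a one-dimensional Fourier transform, whence $U=0$ on $\Sigma\times(0,\infty)$, and parabolic unique continuation propagates this to $\Omega_e\times(0,\infty)$. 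Your approach instead builds the Stinga--Torrea extension $W(x,y)$, recognises \eqref{iid} as the weighted Neumann trace, and invokes the R\"uland-type boundary UCP for the degenerate operator $\partial_y^2+\tfrac{1-2a}{y}\partial_y+\Delta_x$; Laplace-transform injectivity then recovers $V\equiv0$ directly. What each buys: the paper's argument is more elementary (no degenerate-elliptic UCP machinery, only integration by parts and standard parabolic UCP), whereas yours is structurally cleaner, ties the proposition back to the extension framework of \cite{GSU}, and avoids the separate parabolic UCP step. One small point to tighten in your write-up: the trace computations and the $L^1$ bounds needed for dominated convergence are immediate for $x\in\Sigma$ (where the Gaussian decay gives $V(x,t)=O(t^m)$ for all $m$ as $t\to0^+$), but for general $x\in\Omega_e$ they require a word of justification; in fact you only need both traces to vanish on the smaller set $\Sigma\Subset\widetilde W$, where the hypothesis $\overline\Sigma\cap\overline\Theta=\emptyset$ supplies exactly those bounds.
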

   \begin{proof}
Let us call $U=U^{(1)}-U^{(2)}$. Note that, $supp\, u^{(l)}_f \subseteq \overline{\Omega}\cup \overline{\Theta}$,\, $l=1,2$, and $\big(\overline{\Omega}\cup \overline{\Theta}\big) \cap \overline{\Sigma}=\emptyset$. Therefore while restricting $U(\cdot,t)|_{\Sigma}$ we find
\begin{align}
U(\cdot, t)\big|_{\Sigma}&=U^{(1)}(\cdot,t) \big|_{\Sigma} - U^{(2)}(\cdot,t)\big|_{\Sigma}\notag\\[4pt]
&= \int_{\Omega\cup \Theta}\,\,p^{(1)}_t(x,y)\,u^{(1)}_f(y)\,dy-   \int_{\Omega\cup \Theta}\,\,p^{(2)}_t(x,y)\,u^{(2)}_f(y)\,dy,\quad x\in \Sigma
\end{align}
where $p_t^{(l)}(x,y)$ satisfy the estimate (see \eqref{eq:pointwise estimates for p_t})
\begin{equation}\label{klm3}
C_1\, \Big(\frac{1}{4\pi\,t}\Big)^{\frac{n}{2}} e^{-\frac{\alpha_1\,|x-y|^2}{4t}}\leq p_t^{(l)}(x,y) \leq C_2\,\Big(\frac{1}{4\pi\,t}\Big)^{\frac{n}{2}} e^{-\frac{\alpha_2\,|x-y|^2}{4t}}, \quad x, y\in\mathbb{R}^n
\end{equation}
for some $\alpha_1,\alpha_2, C_1,C_2>0$.

For $x\in \Sigma$ fixed, let us  note that, $\frac{U(x,t)}{t^{m+a}}\in L^1(0,\infty)$ for all $m\in\mathbb{N}$. It follows since: let $0<\delta\ll 1$, and writing 
$\int_0^\infty  |\frac{U(x,t)}{t^{m+a}}|\, dt = \int_0^\delta |\frac{U(x,t)}{t^{m+a}}|\, dt + \int_\delta^\infty |\frac{U(x,t)}{t^{m+a}}|\,dt$, we find that  the second integral is finite as $U(x,\cdot)\in L^\infty(0,\infty)$, and the first integral is finite since $U(x,t)\sim O(t^m)$ for all $m\in\mathbb{N}$  near $t=0$ due to \eqref{klm3} as $\underset{y\in \Omega\cup \Theta}{inf}|x-y|>0$ for $x\in \Sigma$.   

Next, we claim that  
\begin{equation}\label{gm91} \int_0^\infty \frac{U(x,t)}{t^{m+a}}\, dt=0,\quad x\in \Sigma,\quad m\in\mathbb{N}. \end{equation}

Since $\mathcal{L}^{(1)}\big|_{\Omega_e}=\mathcal{L}^{(2)}\big|_{\Omega_e}=(-\Delta)$, so $U$ solves 
   	\begin{equation}\begin{cases}\label{Uh}
   	\partial_t U =\Delta U \mbox{ in }\Omega_e\times (0,\infty)\\[4pt]
   	U(\cdot,0)=0\mbox{ in }\Omega_e
   	\end{cases}\end{equation}
   	 and satisfying 
   	\begin{equation}\label{gm86}\int_0^\infty \frac{U(x,t)}{t^{1+a}}\, dt=0\mbox{ in }\Sigma.\end{equation}

   	Note that, for any $\Sigma\Subset W$, we have $U\in C^\infty(\Sigma\times (0,\infty))$, see \cite{Davies}.
   	So for any $d\in\mathbb{N}$,   $(-\Delta)^dU(\cdot,\cdot)$ solves the heat equation
 \begin{equation}\label{ml2}\begin{cases}
 (\partial_t-\Delta)(-\Delta)^dU(x,t)=0 \mbox{ in } \Sigma\times(0,\infty)\\[4pt]
 (-\Delta)^dU(x,0)=0 \mbox{ on }\Sigma.
 \end{cases}\quad d\in \{0\}\cup \mathbb{N}\end{equation}
 For $x\in \Sigma$ fixed, as the solution of the heat equation with the zero initial data there, it possesses  the fact 
 \[
 \frac{(-\Delta)^d U(x,t)}{t^{1+a}}\in L^1(0,\infty), \quad d\in\{0\}\cup\mathbb{N}.
 \] 
      	    	 Let $d=m+1$, $m\in\mathbb{N}$, and taking $(-\Delta)^{m+1}$  on \eqref{gm86}, 
   	 we get
   	 \[ \int_0^\infty \frac{(-\Delta)^{m+1}U(x,t)}{t^{1+a}}\, dt=0,\quad x\in \Sigma,\quad m\in\mathbb{N} \]
   	 or using the equation \eqref{ml2}:
   	 \[ \int_0^\infty \frac{\partial_t\left((-\Delta)^mU(x,t)\right)}{t^{1+a}}\, dt=0,\quad x\in \Sigma, \quad m\in\mathbb{N} \]
   	 or by doing integration by-parts
   	 \begin{equation}\label{ml1} 
   	 \int_0^\infty \frac{(-\Delta)^mU(x,t)}{t^{2+a}}\, dt=0,\quad x\in \Sigma, \quad m\in\mathbb{N}. \end{equation}
   	Consequently, re-arguing with $d=(m-1),\cdots,(m-k),\cdots,1$ and so on, from \eqref{ml1} one obtains
   	\[\int_0^\infty \frac{U(x,t)}{t^{m+a}}\, dt=0,\quad x\in \Sigma, \quad m\in\mathbb{N}.\]

   	Now for any $\eta\in\mathbb{R}$,  since  $\int_0^\infty \frac{U(x,t)}{t^{1+a}}e^{\frac{i\eta}{t}}\, dt$ exists as  $ \frac{U(x,t)}{t^{1+a}}\in L^1(0,\infty)$ for $x\in \Sigma$ fixed, so \eqref{gm91} implies   
   	\[\forall \eta\in\mathbb{R},\quad \int_0^\infty \frac{U(x,t)}{t^{1+a}}\,e^{\frac{i\eta}{t}}\, dt=0,\quad x\in \Sigma. \] 
   	Therefore realizing the above integral as one-dimensional Fourier transform:  $\widehat{V}_x(\eta)=\int_\mathbb{R} V_x(\lambda) \, e^{i\eta\lambda}\,d\lambda=0$, where $V_x(\lambda)=\chi_{(0,\infty)}(\lambda)\frac{U(x,\lambda^{-1})}{\lambda^{1-a}}$ we conclude $U(x,\cdot) =0$ in $(0,\infty)$, $x\in \Sigma$. In particular, $U=0$ in $\Sigma\times (0,\infty)$. 
   	Then by the unique continuation of the infinite propagation of heat we conclude that $U=0$ everywhere in $\Omega_e\times (0,\infty)$ as the solution of \eqref{Uh}. We refer \cite{Miller} for the unique continuation result. 
   	This completes the proof. 
   	\hfill\end{proof}
   As an application of the above result we will deriving certain unique continuation principal for the operator $\mathcal{L}^a$, $0<a<1$.  
   \begin{Proposition}[Unique continuation principle]\label{propU}
   	Let $\Omega$ be a bounded domain in $\R^n$ and $\Sigma\subset \Omega_e$ be any non-empty open set.  Let the operator 
   	 $\mathcal{L}$ introduced in  \eqref{lopera} satisfy the conditions mentioned in \eqref{eq:ellipticity and symmetry condition}-\eqref{eq:condi_bc}, and further we assume $a_{jk}\big|_{\Sigma}=\delta_{jk}$. Let for some $u\in H^a(\R^n)$,  $u|_{\Sigma}=\mathcal{L}^au|_{\Sigma}=0$. Then $u\equiv 0$.   
   \end{Proposition}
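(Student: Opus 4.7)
My plan is to reduce the statement to the single-function version of the heat-semigroup argument already carried out in Proposition \ref{propE}. First I will introduce
\[
U(x,t) := \int_{\R^n} p_t(x,y)\,u(y)\,dy \in C\big([0,\infty);H^a(\R^n)\big),
\]
the heat extension of $u$ associated with $\mathcal{L}$. The pointwise formula \eqref{gm14} will translate the two hypotheses $u|_\Sigma = 0$ and $\mathcal{L}^a u|_\Sigma = 0$, at every $x\in\Sigma$, into the single vanishing-moment identity
\[
\int_0^\infty \frac{U(x,t)}{t^{1+a}}\,dt = 0.
\]

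Next I will upgrade this one moment to infinitely many by mimicking the procedure of Proposition \ref{propE}. Two ingredients will be needed. First, because $\Sigma\subset\Omega_e$ the coefficients $b_j$ and $c$ vanish on $\Sigma$, and combined with $a_{jk}|_\Sigma = \delta_{jk}$ this means that on $\Sigma\times(0,\infty)$ the function $U$ actually solves the heat equation $\partial_t U = \Delta U$, so that $U$ is smooth there and so are all the iterates $(-\Delta)^d U$. Second, for any $\Sigma'\Subset\Sigma$ and $x\in\Sigma'$, the identity $u|_\Sigma = 0$ confines the effective support of $u$ to $\R^n\setminus\Sigma$, which lies at positive distance from $\Sigma'$; together with $u\in L^2(\R^n)$ and the Gaussian upper bound in \eqref{eq:pointwise estimates for p_t}, this will yield $|U(x,t)|,\,|\partial_t^k U(x,t)|\leq C_k\,e^{-c/t}$ as $t\to 0^+$, uniformly in $x\in\Sigma'$. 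These estimates will justify applying $(-\Delta)^{d}$ under the integral, trading $\Delta$ for $\partial_t$ through the heat equation, and integrating by parts $d$ times in $t$ without boundary contributions, to obtain
\[
\int_0^\infty \frac{U(x,t)}{t^{m+a}}\,dt = 0, \qquad x\in\Sigma,\ m\in\N.
\]
The one-dimensional Fourier-inversion trick at the end of Proposition \ref{propE} then forces $U(x,\cdot)\equiv 0$ on $(0,\infty)$ for every $x\in\Sigma$; hence $U\equiv 0$ on $\Sigma\times(0,\infty)$.

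Finally I will globalise this vanishing. The function $U$ is a global solution of the parabolic equation $\partial_t U + \mathcal{L}U = 0$ on $\R^n\times(0,\infty)$ that vanishes on the open set $\Sigma\times(0,\infty)$, so space-like unique continuation for parabolic operators, as in Miller \cite{Miller}, will give $U\equiv 0$ on $\R^n\times(0,\infty)$. Passing $t\to 0^+$ in the $H^a$-continuous family $\{U(\cdot,t)\}_{t\geq 0}$ will then conclude $u\equiv 0$.

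The main obstacle I foresee is making the moment-bootstrap step rigorous. In Proposition \ref{propE} the analogous function had initial data $u^{(1)}_f - u^{(2)}_f$ with compact support disjoint from $\Sigma$, whereas here $u$ is only an $H^a(\R^n)$ distribution known to vanish on $\Sigma$, so $u$ can carry mass arbitrarily close to $\Sigma$. I will therefore have to restrict attention to compactly contained subsets $\Sigma'\Subset\Sigma$ and squeeze the off-diagonal Gaussian bound of $p_t(x,y)$ between $\Sigma'$ and $\R^n\setminus\Sigma$ to recover enough decay in $t$ at both $0$ and $\infty$ to legitimize the repeated differentiation, integration by parts, and Fourier-inversion step.
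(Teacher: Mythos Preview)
Your proposal is correct and follows essentially the same route as the paper: introduce the heat extension $U$, observe that on $\Sigma$ the operator reduces to $-\Delta$ so that the moment-bootstrap argument of Proposition~\ref{propE} applies verbatim to conclude $U=0$ on $\Sigma\times(0,\infty)$, and then invoke parabolic unique continuation to globalise. The paper's own proof is a two-line reference back to Proposition~\ref{propE} together with a citation for unique continuation (it cites \cite{VESE} rather than \cite{Miller}, but either suffices); your write-up is simply more explicit, and your observation that one must work on $\Sigma'\Subset\Sigma$ to recover the off-diagonal Gaussian decay---since here $u$ is merely $H^a$ and vanishes only on $\Sigma$, not on a set at positive distance from $\Sigma$---is a genuine technical point that the paper glosses over.
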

   \begin{Remark}
   	This result has been already proved in \cite{GSU, GLX} for the fractional operator $\mathcal{L}_0^a$ where $\mathcal{L}_0$ denotes the principal part of $\mathcal{L}$ in \eqref{lopera}. 
   	
   	However, the method of proof presented here is different from the method presented in  \cite{GSU, GLX, GRSU} and does not require any regularity assumption on $A$ as it does in \cite{GLX}.
   	\end{Remark}
   	\begin{proof}[Proof of Proposition \ref{propU}]
   	The proof follows from the proof of the above Proposition \ref{propE}.  Let $u\in H^a(\mathbb{R}^n)$
   	and $U= \int_{\R^n}p_t(x,y)u(y)\,dy\, \in C\left([0, \infty); H^a(\R^n)\right)$ be the unique solution of
   	\[
   	\begin{cases}
   	\partial_t U = \mathcal{L}\, U \quad\mbox{in }\mathbb{R}^n\times (0,\infty)\\
   	U\big|_{t=0} = u\quad\mbox{in }\mathbb{R}^n.
   	\end{cases}
   	\]
   	Now from the hypotheses of our Proposition as it follows: $U$ satisfies 
   	\[
   	\partial_t U =\Delta U \mbox{ in }\Sigma\times (0,\infty)
   	\]
   	with $ U(\cdot,0)=0$ in $\Sigma$ and  
   	\[
   	\int_0^\infty \frac{U(x,t)}{t^{1+a}}\, dt=0\mbox{ in }\Sigma.
   	\] 
  Thus by following the proof of Proposition \ref{propE}, we get $U=0$ in $\Sigma\times (0,\infty)$, and then by unique continuation (see \cite{VESE}) of the solution of the parabolic equation it follows $U \equiv 0$ implying $u \equiv 0$ as well. This completes the proof. 
  \hfill
   \end{proof}

Continuing the proof of the Theorem \ref{thm1}:   
   Let us define the function 
   \begin{equation}\label{Phi}\Phi^{(l)}(x)= \int_0^\infty {U}^{(l)}(x,t)\, dt, \quad l=1,2\end{equation}
   and 
   thanks to the above Proposition \ref{propE}, we have 
   \[
   \Phi^{(1)} = \Phi^{(2)} \quad\mbox{in }\Omega_e.
   \]
   Note that, by definition \eqref{Phi}, $\Phi^{(l)}\in L^2(\mathbb{R}^n)$ and it solves 
   \begin{equation}\label{ellip}
   \mathcal{L}^{(l)}\Phi^{(l)} = u^{(l)}_f \quad\mbox{in }\R^n, \quad l=1,2
   \end{equation}
   in the sense of distribution. 
   
   Then further using the regularity result (cf.\cite[Theorem 6.12]{GRUBB}) we  conclude that $\Phi^{(l)}\in H^{a+2}(\R^n)$, as $u^{(l)}_f\in H^a(\R^n)$.
   
   Let us call, 
   \begin{equation}\label{psik} \Psi^{(l)} = (\mathcal{L}^{(l)})^a\Phi^{(l)} \quad\mbox{in }\R^n. \end{equation}
   We find that $\Psi^{(l)}\in H^{2-a}(\R^n)$, as $\Phi^{(l)}\in H^{2+a}(\R^n)$.
   
   Since
   \[(\mathcal{L}^{(l)})\left((\mathcal{L}^{(l)})^a\Phi^{(l)}\right) =(\mathcal{L}^{(l)})^a\left((\mathcal{L}^{(l)})\Phi^{(l)}\right)\mbox{  in }\R^n\]
   so from \eqref{ellip} it follows that 
   \begin{equation}\label{ellipN}
   \mathcal{L}^{(l)}\Psi^{(l)} =(\mathcal{L}^{(l)})^a\,u^{(l)}_f \quad\mbox{in }\mathbb{R}^n 
   \end{equation}
   and in particular from \eqref{fgk}, we have
   \begin{equation}\label{ellip2}
  \mathcal{L}^{(l)}\Psi^{(l)} =0 \quad\mbox{in }\Omega,\quad l=1,2.
   \end{equation}
   We would like to show 
   \begin{equation}\label{ps12}
   \Psi^{(1)} = \Psi^{(2)} \quad\mbox{in }\Omega_e.
   \end{equation}
   Note that, since $\Psi^{(l)}\in H^{2-a}(\mathbb{R}^n)$, and $0<a<1$, so $\Psi^{(l)}\in H^1(\mathbb{R}^n)$ for $l=1,2$. 
   
   Now let us consider 
   \[V^{(l)}(x,t) = \int_{\R^n}p^{(l)}_t(x,y)\Phi^{(l)}(y)\,dy,\qquad l=1,2\] solving 
   \begin{equation}\label{kh2}
   \begin{cases}
   \partial_t V^{(l)} =\mathcal{L}^{(l)}V^{(l)} \quad\mbox{ in }\R^n\times (0,\infty)\\
   V^{(l)}(\cdot,0)=\Phi^{(l)} \quad\mbox{ in }\R^n
   \end{cases}\qquad l=1,2.
   \end{equation}
   We note that, $V^{(l)}$ defined above belongs to $C\left([0, \infty); H^{a+2}(\R^n)\right)\,\cap\, C^\infty\left(\R^n\times (0,\infty)\right)$ be the unique solution of \eqref{kh2}.

   Then by taking the action of $\mathcal{L}^{(l)}$ over the equation \eqref{kh2} with respect to the space variables,  $\widetilde{V}^{(l)}(x,t)=\mathcal{L}^{(l)}V^{(l)}(x,t)$ solves 
   \begin{equation}\label{kh3}
   \begin{cases}
   \partial_t \widetilde{V}^{(l)} =\mathcal{L}^{(l)}\widetilde{V}^{(l)} \mbox{ in }\R^n\times (0,\infty)\\
   \widetilde{V}^{(l)}(\cdot,0)=u^{(l)}_f \mbox{ in }\R^n
   \end{cases}\qquad l=1,2
   \end{equation}
   due to \eqref{ellip}. 
   
   By the definition of $V^{(l)}$, $\widetilde{V}^{(l)}$, and using the symmetric action of the heat kernel $\langle \mathcal{L}^{(l)} p_t(x,y), \varphi(y)\rangle_y= \langle p_t(x,y), \mathcal{L}^{(l)}\varphi(y)\rangle_y $ for $\varphi\in C^\infty_c(\R^n)$, it follows that $\widetilde{V}^{(l)}(x,t)= \int_{\R^n} p_t(x,y)\, \mathcal{L}^{(l)}\Phi^{(l)}(y)\,dy$ 
   belongs to $C\left([0, \infty); H^{a}(\R^n)\right)$ as the unique solution of \eqref{kh3}, with satisfying the estimate $\|\widetilde{V}^{(l)}(\cdot,t)\|_{H^a(\mathbb{R}^n)}\leq C\|u^{(l)}_f\|_{H^a(\mathbb{R}^n)}$, 
where $C>0$ independent of $t$ and $u^{(l)}_f$ (see \eqref{Uesti}).
   
   Consequently, by the uniqueness of the solution of the heat equations \eqref{kh} and \eqref{kh3}, we conclude 
   \begin{equation}\label{uv}
   U^{(l)} = \widetilde{V}^{(l)}=\mathcal{L}^{(l)}V^{(l)} \quad\mbox{in }\R^n\times (0,\infty).
   \end{equation}
   
   Therefore, from \eqref{kh2}, we conclude
   \begin{align*}\label{ide}
   \partial_t V^{(1)} - \partial_t V^{(2)}  = \mathcal{L}^{(1)}V^{(1)} - \mathcal{L}^{(2)}V^{(2)} &= U^{(1)}-U^{(2)}\\[1mm]\notag
   &=0 \mbox{ in }\Omega_e\times (0,\infty)
   \end{align*}
   thanks to the Proposition \ref{propE}.
   
   Hence, \[(V^{(1)}-V^{(2)})(\cdot,t)=(V^{(1)} -V^{(2)} )(\cdot,0) \quad\mbox{in }\Omega_e\times (0,\infty)\]
   or,
   \[V^{(1)} (\cdot,t) -V^{(1)} (\cdot,0) = V^{(2)} (\cdot,t) -V^{(2)} (\cdot,0) \quad\mbox{in }\Omega_e\times (0,\infty)\]
   or,
   \[
   \forall x\in \Omega_e,\quad \int_0^\infty \frac{V^{(1)} (x,t)-V^{(1)} (x,0)}{t^{1+a}}\, dt = \int_0^\infty \frac{V^{(2)} (x,t)-V^{(2)} (x,0)}{t^{1+a}}\, dt.
   \]
   This shows, by applying the definition \eqref{gm14} on \eqref{kh2}:
   \[(\mathcal{L}^{(1)})^a\,\Phi^{(1)}  = (\mathcal{L}^{(2)})^a\,\Phi^{(2)} \quad\mbox{in }\Omega_e.\]
   Hence we have shown \eqref{ps12}, i.e. $\Psi^{(1)} =\Psi^{(2)} $ in $\Omega_e$. 
   
   Consequently,  \eqref{ellip2}-\eqref{ps12} imply that  
   \begin{equation}\label{local}
   \begin{cases}
   \mathcal{L}^{(l)}\,\Psi^{(l)}  =0 \quad\mbox{in }\Omega\\
   (\Psi^{(1)} , \partial_{\nu^{(1)}} \Psi^{(1)} )= (\Psi^{(2)} , \partial_{\nu^{(2)}}\Psi^{(2)} ) \mbox{ on }\partial\Omega.
   \end{cases}
   \end{equation}
   Recall that, $\Psi^{(l)} \in H^{2-a}(\mathbb{R}^n)$, $0<a<1$. Therefore $\Psi^{(l)} $ is in $H^1(\Omega)$, and on the boundary $\partial\Omega$ the boundary Cauchy data $(\Psi^{(l)} , \partial_{\nu^{(l)}} \Psi^{(l)} )$ is well defined in $H^{\frac{1}{2}}(\partial\Omega)\times H^{-\frac{1}{2}}(\partial\Omega)$ for $l=1,2$.  
   
   Next, we would like to vary all possible $f$ in \eqref{fgk} to see whether it allows the all possible Cauchy data in \eqref{local} to address the local inverse problem of determining the coefficients $a^{(1)}_{jk}=a^{(2)}_{jk}$, $b^{(1)}_j=b^{(2)}_j$ and $c^{(1)}=c^{(2)}$ uniquely in $\Omega$. 
   
   \subsection{Local inverse problem:}\label{lip}
   Based on the development what we have made so far, here we will show that 
   the boundary Cauchy data  $\mathcal{C}^{(l)}_{\partial\Omega}=\{\mathscr{V}^{(l)}|_{\partial\Omega}, \partial_{\nu^{(l)}} \mathscr{V}^{(l)}|_{\partial\Omega}\}$
   for all the solutions $\{\mathscr{V}^{(l)}\in H^1(\Omega): \mathcal{L}^{(l)}\mathscr{V}^{(l)}=0 \mbox{ in } \Omega\}$  $l=1,2$ 
    are equal, i.e. 
    \begin{equation}\label{lchy}
    \mathcal{C}^{(1)}_{\partial\Omega}=\mathcal{C}^{(2)}_{\partial\Omega}.
    \end{equation} 
   
   Let us define two spaces
   \[
   \mathcal{S}^{(l)}(\Omega) = \{\mathscr{V}^{(l)}\in H^1(\Omega):\, \mathcal{L}^{(l)}\,\mathscr{V}^{(l)}=0 \mbox{ in } \Omega\},\,\, l=1,2
   \]
   and 
   \[
   \widetilde{\mathcal{S}}^{(l)}(\Omega)=\{\Psi^{(l)} |_{\Omega}:\,\Psi^{(l)} \mbox{ solves \eqref{ellipN} in $\mathbb{R}^n$}\},\,\, l=1,2.
   \]
   \begin{Lemma}\label{lemL}
   	$\widetilde{\mathcal{S}}^{(l)}(\Omega)$ is dense in $\mathcal{S}^{(l)}(\Omega)$ in $H^1$-norm topology. 
   \end{Lemma}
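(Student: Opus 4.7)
The plan is to use a Hahn--Banach duality argument combined with the nonlocal unique continuation of Proposition \ref{propU}, in the spirit of the Runge-type approximation developed for the fractional Schr\"{o}dinger equation in \cite{GSU}. First note that $\widetilde{\mathcal{S}}^{(l)}(\Omega)\subseteq \mathcal{S}^{(l)}(\Omega)$: from \eqref{ellipN} and \eqref{fgk}, $\mathcal{L}^{(l)}\Psi^{(l)}=(\mathcal{L}^{(l)})^a u^{(l)}_f=0$ in $\Omega$, and $\Psi^{(l)}\in H^{2-a}(\mathbb{R}^n)\subset H^1(\Omega)$. Since $\mathcal{S}^{(l)}(\Omega)$ is closed in $H^1(\Omega)$, by Hahn--Banach the density claim reduces to showing: every continuous linear functional $F$ on $H^1(\Omega)$ which annihilates $\widetilde{\mathcal{S}}^{(l)}(\Omega)$ also annihilates $\mathcal{S}^{(l)}(\Omega)$. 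By standard density we may assume $F \in L^2(\Omega)$, extended by zero to $\widetilde{F}\in L^2(\mathbb{R}^n)$.

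The key computation combines \eqref{psik} and \eqref{ellip} to write $\Psi^{(l)}_f=(\mathcal{L}^{(l)})^a \Phi^{(l)}_f = (\mathcal{L}^{(l)})^{a-1} u^{(l)}_f$; self-adjointness of the fractional powers of $\mathcal{L}^{(l)}$ then recasts the hypothesis as
\[
0 \;=\; \int_\Omega F\,\Psi^{(l)}_f\,dx \;=\; \langle h,\, u^{(l)}_f\rangle_{\mathbb{R}^n} \;=\; \int_\Omega h\, u^{(l)}_f\,dx + \int_W h\, f \,dx
\]
for every $f\in C_c^\infty(W)$, where $h:=(\mathcal{L}^{(l)})^{a-1}\widetilde F$ and we used $\mathrm{supp}\,u^{(l)}_f\subseteq\overline{\Omega}\cup\overline{W}$ with $u^{(l)}_f|_W = f$. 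Introduce the auxiliary $\varphi\in \widetilde{H}^a(\Omega)$ as the unique solution, via the Lax--Milgram theorem applied to the coercive bilinear form \eqref{eq:integral represent for nonlocal}, of the nonlocal Dirichlet problem $(\mathcal{L}^{(l)})^a\varphi = h$ in $\Omega$, $\varphi = 0$ in $\Omega_e$. Since $\mathrm{supp}\,\varphi\subseteq\overline{\Omega}$ and $(\mathcal{L}^{(l)})^a u^{(l)}_f = 0$ in $\Omega$, self-adjointness gives $\langle\varphi, (\mathcal{L}^{(l)})^a u^{(l)}_f\rangle = 0$; expanding the same quantity by self-adjointness the other way produces $\int_\Omega h\, u^{(l)}_f\,dx + \int_W ((\mathcal{L}^{(l)})^a\varphi)\, f\,dx = 0$. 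Comparing with the preceding display yields $(\mathcal{L}^{(l)})^a\varphi = h$ in $W$.

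Since $\varphi\in \widetilde{H}^a(\Omega)$ forces $\varphi|_W = 0$, if one can further arrange $h|_W = 0$ then the pair $(\varphi|_W,\,(\mathcal{L}^{(l)})^a\varphi|_W) = (0,0)$ activates Proposition \ref{propU}, giving $\varphi\equiv 0$ in $\mathbb{R}^n$. Unwinding the construction then forces $h\equiv 0$ in $\Omega$ and hence $F=0$ in $\Omega$; in particular $F$ annihilates $\mathcal{S}^{(l)}(\Omega)$, as required. The main obstacle is achieving $h|_W = 0$: since $\widetilde F$ is supported in $\overline\Omega$, the smoothed lift $h = (\mathcal{L}^{(l)})^{a-1}\widetilde F$ is a priori nonzero throughout $\mathbb{R}^n$. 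This is to be overcome by exploiting the freedom in the representation of the functional modulo the annihilator of $\mathcal{S}^{(l)}(\Omega)$ (which consists of functionals of the form $v\mapsto \int_\Omega (\mathcal{L}^{(l)} w)\, v\, dx$ with $w\in H^1_0(\Omega)$), and by using that $\mathcal{L}^{(l)}=-\Delta$ in $\Omega_e$ to absorb the exterior contribution into the local auxiliary. Once $h|_W = 0$ is secured, the rest --- Lax--Milgram solvability for $\varphi$, the bilinear-form rearrangement, and the invocation of Proposition \ref{propU} --- is routine.
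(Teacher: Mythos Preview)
Your argument stalls exactly where you flag ``the main obstacle'': forcing $h|_W=0$. The proposed fix---replacing $F$ by $F+G$ with $G$ in the annihilator of $\mathcal{S}^{(l)}(\Omega)$---does not work. First, functionals $v\mapsto\int_\Omega(\mathcal{L}^{(l)}w)\,v\,dx$ with $w\in H^1_0(\Omega)$ do \emph{not} annihilate $\mathcal{S}^{(l)}(\Omega)$: integrating by parts and using $B_\Omega(v,w)=0$ leaves the boundary term $-\int_{\partial\Omega}(\partial_{\nu^{(l)}}w)\,v$, which is nonzero for generic $v|_{\partial\Omega}$. Second, even with the correct annihilator, you would need to find $\widetilde G$ supported in $\overline\Omega$ so that $(\mathcal{L}^{(l)})^{a-1}(\widetilde F+\widetilde G)$ vanishes on $W$; this is itself a nonlocal Runge-type constraint of the same nature as the lemma you are proving, so the reasoning becomes circular. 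Third, even granting $\varphi\equiv0$, the inference ``$h|_\Omega=0$ hence $F=0$ in $\Omega$'' fails: $h=(\mathcal{L}^{(l)})^{a-1}\widetilde F$ vanishing on $\Omega$ is a nonlocal condition on $\widetilde F$ and does not imply $\widetilde F=0$, nor does it imply that $\langle F,\mathscr V^{(l)}\rangle=0$ for $\mathscr V^{(l)}\in\mathcal{S}^{(l)}(\Omega)$.

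The paper's proof sidesteps all of this by never introducing the smoothed function $h$. It solves the \emph{local} equation $\mathcal{L}^{(l)}\varPhi^{(l)}=F$ in $\mathbb{R}^n$, then uses a separately established density statement (Proposition~\ref{dL5}: the set $\{(\mathcal{L}^{(l)})^a u^{(l)}_f\big|_E\}=\{\mathcal{L}^{(l)}\Psi^{(l)}\big|_E\}$ is dense in $H^{-a}(E)$ for $E=\mathbb{R}^n\setminus\overline{\Omega\cup W}$) to deduce from $\langle\mathcal{L}^{(l)}\Psi^{(l)},\varPhi^{(l)}\rangle_E=0$ that $\varPhi^{(l)}=0$ on $E$. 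Classical elliptic unique continuation then propagates this to all of $\Omega_e$, so $\varPhi^{(l)}\in H^1_0(\Omega)$ with $\partial_{\nu^{(l)}}\varPhi^{(l)}|_{\partial\Omega}=0$, and a single integration by parts gives $\langle F,\mathscr V^{(l)}\rangle=0$. The essential difference is that the auxiliary $\varPhi^{(l)}$ is built from the \emph{integer-order} inverse $(\mathcal{L}^{(l)})^{-1}$, so locality of $\mathcal{L}^{(l)}$ and classical UCP are available; your choice $h=(\mathcal{L}^{(l)})^{a-1}\widetilde F$ is intrinsically nonlocal, and that is what produces the obstruction you cannot remove.
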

   
   Suppose the above Lemma is true. It says that for a given any $\mathscr{V}^{(1)}\in \mathcal{S}^{(1)}(\Omega)$, and $\mathscr{V}^{(2)}\in \mathcal{S}^{(2)}(\Omega)$ with $\mathscr{V}^{(1)}=\mathscr{V}^{(2)}=h$ on $\partial\Omega$, where $h\in H^{\frac{1}{2}}(\partial\Omega)$ arbitrary,  there exist sequences  $\{\Psi^{(l)}_k\}_{k\in\mathbb{N}}$ solving \eqref{ellipN}, such that $\{\Psi^{(l)}_k\}\big|_{\Omega} \to \mathscr{V}^{(l)}$ strongly in $H^1(\Omega)$ as $k\to \infty$. Since $\{\Psi^{(l)}_k\}\big|_{\Omega}$ in $H^1(\Omega)$ solves the local system \eqref{local}, that
   \[\begin{cases}
   \mathcal{L}^{(l)}\Psi^{(l)}_k=0 \quad\mbox{in }\Omega,\\
   (\Psi^{(1)}_k, \partial_{\nu^{(1)}} \Psi^{(1)}_k)= (\Psi^{(2)}_k, \partial_{\nu^{(2)}}\Psi^{(2)}_k) \quad\mbox{on }\partial\Omega.
   \end{cases}\quad  k\in \mathbb{N}
   \]
   Therefore, we obtain 
   \begin{equation}\label{lan}
   \begin{cases}
   \mathcal{L}^{(l)}\mathscr{V}^{(l)}=0 \quad\mbox{in }\Omega,\\
   (\mathscr{V}^{(1)}, \partial_{\nu^{(1)}} \mathscr{V}^{(1)})= (\mathscr{V}^{(2)}, \partial_{\nu^{(2)}}\mathscr{V}^{(2)}) \quad\mbox{on }\partial\Omega
   \end{cases} 
   \end{equation}
   i.e., in other words \eqref{lchy} follows.
   
   Thus we have reduced our nonlocal inverse problem (cf. Subsection \ref{nlip}) into solving a local inverse problem of determining the coefficients $a^{(1)}_{jk}=a^{(2)}_{jk}$, $b^{(1)}_j=b^{(2)}_j$ and $c^{(1)}=c^{(2)}$ uniquely in $\Omega$ 
   from the equality of the boundary Cauchy data  $\mathcal{C}^{(1)}_{\partial\Omega}=\mathcal{C}^{(2)}_{\partial\Omega}$.

   Now it remains to prove Lemma \ref{lemL}. We do it in two steps. Let us begin with this following density result. 
   \begin{Proposition}\label{dL5}
   		Let $\Omega$ and $W$ be two bounded non-empty open set in $\mathbb{R}^n$ such that $\overline{\Omega}\cap \overline{W}=\emptyset$. 	Let $0<a<1$ and $u\in H^a(\R^n)$ solves 
   	\begin{equation}\label{dL1}
   	\mathcal{L}^au=0 \mbox{ in }\Omega,\quad supp\, u\subseteq \overline{\Omega}\cup\overline{W}.
   	\end{equation}
   	Then for any open set $E\subseteq \mathbb{R}^n\setminus \overline{(\Omega\cup W)}$, the set 
   	\[ \mathcal{N}(E):=\{\mathcal{L}^au\,\big|_{E}:\,\, u\mbox{ solves }\eqref{dL1}\} \]
   	is dense in $H^{-a}(E)$. 
   \end{Proposition}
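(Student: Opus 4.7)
The plan is to prove the dual statement via the Hahn--Banach theorem, reducing the density of $\mathcal{N}(E) \subseteq H^{-a}(E)$ to showing that every $\varphi \in \widetilde{H}^a(E)$ annihilating $\mathcal{N}(E)$ must vanish. Extending such a $\varphi$ by zero to $H^a(\R^n)$ (so $\supp \varphi \subseteq \overline{E}$), the annihilation condition becomes $\langle \mathcal{L}^a u, \varphi \rangle_{\R^n} = 0$ for every $u$ solving \eqref{dL1}. The aim is then to close the argument by invoking the unique continuation principle of Proposition \ref{propU}.

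To connect the exterior information encoded by $\varphi$ with the interior region where $\mathcal{L}^a u$ vanishes, I would introduce the auxiliary solution $w \in H^a(\R^n)$ of the exterior Dirichlet problem
\[
\mathcal{L}^a w = 0 \text{ in } \Omega, \qquad w = \varphi \text{ in } \Omega_e,
\]
which is well-posed by the discussion in Section \ref{sec2}. The strategy is to prove $w \equiv 0$ in $\R^n$, because this would immediately force $\varphi = w|_{\Omega_e} = 0$.

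The heart of the argument is a localized duality computation. Given any $f \in \widetilde{H}^a(W)$, let $u_f$ solve \eqref{dL1} with exterior data $f$, so $u_f - f \in \widetilde{H}^a(\Omega)$. Because $\mathcal{L}^a u_f = 0$ in $\Omega$ and $w - \varphi \in \widetilde{H}^a(\Omega)$ (as $w = \varphi$ on $\Omega_e$), the pairing $\langle \mathcal{L}^a u_f,\, w - \varphi \rangle$ vanishes; combined with the hypothesis $\langle \mathcal{L}^a u_f, \varphi \rangle = 0$, this yields $\langle \mathcal{L}^a u_f, w \rangle = 0$. Now, transposing $\mathcal{L}^a$ to the other slot via the symmetry of the kernel $\mathcal{K}_a$ (cf.\ \eqref{eq:integral represent for nonlocal}), decomposing $u_f = (u_f - f) + f$, and using $\mathcal{L}^a w = 0$ in $\Omega$, only the term $\langle f, \mathcal{L}^a w \rangle_W$ survives. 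Since $f \in \widetilde{H}^a(W)$ was arbitrary, this forces $\mathcal{L}^a w = 0$ in $W$.

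Finally, because $E \subseteq \R^n \setminus \overline{\Omega \cup W}$ implies $\overline{E} \cap W = \emptyset$, we also have $w|_W = \varphi|_W = 0$. Applying Proposition \ref{propU} on the open set $\Sigma = W$ --- legitimate since, under the standing assumption of Theorem \ref{thm1}, $a_{jk} = \delta_{jk}$ throughout $\Omega_e \supseteq W$ --- gives $w \equiv 0$ in $\R^n$, completing the argument. The delicate point I anticipate is the bookkeeping of Sobolev dualities in the localization step that equates $\langle \mathcal{L}^a u_f, w \rangle_{\R^n}$ with $\langle f, \mathcal{L}^a w \rangle_W$; this is precisely where the symmetric integral representation \eqref{eq:integral represent for nonlocal} of $\mathcal{L}^a$ (rather than just its spectral definition) becomes essential, and where one must carefully verify that the pairings against $\widetilde{H}^a(\Omega)$ genuinely localize to $\Omega$ so that the interior equation $\mathcal{L}^a w = 0$ can be used to discard them.
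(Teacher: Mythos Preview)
Your proposal is correct and follows essentially the same route as the paper: a Hahn--Banach reduction, introduction of the adjoint solution $w$ (the paper calls it $v$) with exterior data $\varphi$ supported in $\overline{E}$, a duality computation using self-adjointness of $\mathcal{L}^a$ together with $\mathcal{L}^a u_f=0$ in $\Omega$ and $\mathcal{L}^a w=0$ in $\Omega$ to conclude $\mathcal{L}^a w=0$ in $W$, and finally the unique continuation Proposition~\ref{propU} on $\Sigma=W$. The only cosmetic difference is that the paper compresses your step-by-step localization into a single chain of equalities $\langle u,\mathcal{L}^a v\rangle_W=\langle u,\mathcal{L}^a v\rangle_{\R^n}-\langle u,\mathcal{L}^a v\rangle_\Omega=\langle \mathcal{L}^a u,v\rangle_{\R^n}=\langle \mathcal{L}^a u,h\rangle_E=0$.
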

   
   \begin{Remark}\label{dL8}
   	We remark here that by varying $u|_{W}$ in $C^\infty_c(W)$ where $u$ solves \eqref{dL1}, while the set  $\{\mathcal{L}^au|_{E}\}$ remains bounded in $H^{-a}(E)$, however the set $\{\mathcal{L}^au|_{W}\}$ do not necessarily remained bounded in $H^{-a}(W)$. See \cite{RS1, RS4} in this direction. 
   \end{Remark}
   
   \begin{proof}[Proof of Proposition \ref{dL5}]
   	
   	In order to prove the required density result, by using the Hahn-Banach theorem, it is enough to show that, if for some $h\in H^a_0(E)$ 
   	\begin{equation}\label{dL10}
   	\langle \mathcal{L}^au, h\rangle_{(H^{-a}(E), H^a_0(E))}=0\quad\mbox{ for all $u$ solving \eqref{dL1}}
   	\end{equation}
   	then it must follow $h\equiv 0$. 
   	
   	Let us consider the adjoint problem, that $v\in H^a(\mathbb{R}^n)$ solving  
   	\begin{equation}\label{dL2}
   	\begin{cases}\mathcal{L}^av=0 \quad\mbox{ in }\Omega\\
   	v= h \quad\mbox{ in }E\\
   	v=0 \quad\mbox{ in }\mathbb{R}^n\setminus \overline{(\Omega\cup E)}
   	\end{cases}\end{equation}	
   	
   	Hence we find from \eqref{dL1}, \eqref{dL2}, and \eqref{dL10} that 
   	\begin{align*} 
   	\langle u, \mathcal{L}^av\rangle_{W} 
   	&= \langle u, \mathcal{L}^av\rangle_{\mathbb{R}^n} 
   	-\langle u, \mathcal{L}^av\rangle_{\Omega}\\
   	&=\langle \mathcal{L}^au, v\rangle_{\mathbb{R}^n}\\
   	&= \langle \mathcal{L}^au, h\rangle_{E}=0.   
   	\end{align*} 
   	So by varying $u|_{W}\in C^\infty_c(W)$, we obtain $\mathcal{L}^av=0$ in $W$. Since $W\subset \mathbb{R}^n\setminus \overline{(\Omega\cup E)}$, and $v=0$ there in $W$. Thus it follows from Proposition \ref{propU} that $v\equiv 0$. Consequently, it implies  $h= 0$. This completes the proof. 
   	\hfill\end{proof}
   
   Now we complete the proof of Lemma \ref{lemL}. 
   
   \begin{proof}[Proof of Lemma \ref{lemL}]
   	We want to show the space $\widetilde{\mathcal{S}}^{(l)}(\Omega)$ is dense in $S^{(l)}(\Omega)$ in $H^1(\Omega)$ strong topology.  As usual, we invoke the Hahn-Banach theorem to prove our result, by saying if for some $F\in \widetilde{H}^{-1}(\Omega)$ $(=(H^1(\Omega))^{*})$, 
   	\begin{equation}\label{dL7} 
   	\langle F, \Psi^{(l)} \rangle_{ \widetilde{H}^{-1}(\Omega), H^{1}(\Omega)} = 0 \quad\mbox{for all }\Psi^{(l)} \in \widetilde{\mathcal{S}}^{(l)}(\Omega) 
   	\end{equation}
   	then it must follow
   	\begin{equation}\label{dL6}
   	 \langle F, \mathscr{V}^{(l)}\rangle_{ \widetilde{H}^{-1}(\Omega), H^{1}(\Omega)} = 0 \quad\mbox{for all }\mathscr{V}^{(l)}\in \mathcal{S}^{(l)}(\Omega) 
   	 \end{equation}
   	for corresponding $l=1, 2$ respectively.

   	We recall that, $\widetilde{H}^{-1}(\Omega)$ be the dual space of $H^1(\Omega)$ defined as
   	\[  \widetilde{H}^{-1}(\Omega):=\{ F\in H^{-1}(\mathbb{R}^n):\,\, supp\, F\subseteq \overline{\Omega} \}  \] 	
   	with the duality bracket
   	\[ \langle  F, \phi\rangle_{\widetilde{H}^{-1}(\Omega), H^{1}(\Omega)} = \langle  F, \widetilde{\phi}\rangle_{H^{-1}(\mathbb{R}^n), H^{1}(\mathbb{R}^n)}     \]	
   	where $\phi\in H^1(\Omega)$ and $\widetilde{\phi}$  be its any  $H^1(\mathbb{R}^n)$ extension (i.e. $\widetilde{\phi}\in H^1(\mathbb{R}^n)$ and $\widetilde{\phi}|_{\Omega} =\phi$).\\
   	\\
   	Let $\Omega$, $W$ are as in Proposition \ref{dL5}, and denote $E:=\mathbb{R}^n\setminus \overline{(\Omega\cup W)}$. Let $0<a<1$ and we recall \eqref{ellipN} as $\Psi^{(l)} \in H^{1}(\mathbb{R}^n)$ solving
   	\begin{equation}\label{dL9}
   	-\mathcal{L}^{(l)}\Psi^{(l)}  =(\mathcal{L}^{(l)})^au^{(l)}_f \quad\mbox{in }\mathbb{R}^n.  
   	\end{equation}
   	Now by varying $f\in C^\infty_c(W)$, the Proposition \ref{dL5} gives us the space 
   	\begin{equation}\label{st1}
    \mathcal{N}_k(E)=\{\mathcal{L}^{(l)}\Psi^{(l)} \,|_{E}\,:\,\Psi^{(l)}  \mbox{ solves \eqref{dL9} is dense in }H^{-1}(E). 
\end{equation}   
\vspace{3pt}
\noindent
Note that, as pointed out as in Remark \ref{dL8},  we have proven the density result (cf. Proposition \ref{dL5}) in $E$ only, and not necessarily in $\mathbb{R}^n\setminus \overline{\Omega}$. 
   	\vspace{3pt}
   	
   	Following that, let us consider $\widetilde{\Psi}^{(l)}\in H^1(\mathbb{R}^n)$ as 
   	\[\widetilde{\Psi}^{(l)}= \begin{cases} \Psi^{(l)}  \mbox{ in }\mathbb{R}^n\setminus \overline{W}\\
   	\mathscr{U}^{(l)} \mbox{ in }W
   	\end{cases}\]
   	where, $\mathscr{U}^{(l)}\in H^1(W)$ is defined as 
   	\[\begin{cases} (-\mathcal{L}^{(l)})\mathscr{U}^{(l)} = 0 \quad\mbox{ in }W\\
   	\mathscr{U}^{(l)} = \Psi^{(l)}  \quad\mbox{ on }\partial W.
   	\end{cases}
   	\]
   	Clearly, $\widetilde{\Psi}^{(l)}\in H^1(\mathbb{R}^n)$ and $\mathcal{L}^{(l)}\widetilde{\Psi}^{(l)} \in \widetilde{H}^{-1}(\mathbb{R}^n\setminus \overline{W})$ as
   	\begin{equation}\label{dL4}
   	\mathcal{L}^{(l)}\widetilde{\Psi}^{(l)}=\begin{cases}\mathcal{L}^{(l)}{\Psi}^{(l)}&\quad\mbox{in }\mathbb{R}^n\setminus \overline{W}\\
   	0&\quad\mbox{in }W
   	\end{cases}.
   	\end{equation}
   	
   	Now let us assume for some $F\in \widetilde{H}^{-1}(\Omega)$, \eqref{dL7} holds. Then from there we write
   	\begin{align} 0 &=\langle F, \Psi^{(l)} \rangle_{ \widetilde{H}^{-1}(\Omega), H^{1}(\Omega)}\notag\\
   	&=\langle  F, \widetilde{\Psi}^{(l)}\rangle_{ {H}^{-1}(\mathbb{R}^n), H^{1}(\mathbb{R}^n)}.\label{dL11}
   	\end{align}
   	Since, $F\in H^{-1}(\mathbb{R}^n)$ with $supp\, F\subseteq\overline{\Omega}$;  So there exists a $\varPhi^{(l)}\in H^1(\mathbb{R}^n)$ uniquely solving (cf.\cite[Theorem 6.12]{GRUBB})
   	\begin{equation}\label{dL3} \mathcal{L}^{(l)}\varPhi^{(l)} = F \quad\mbox{ in }\mathbb{R}^n.
   	\end{equation}
   	Then from \eqref{dL11}, we simply obtain 
   	\[  
   	\langle \mathcal{L}^{(l)}\widetilde{\Psi}^{(l)}, \varPhi^{(l)}\rangle_{ {H}^{-1}(\mathbb{R}^n), H^{1}(\mathbb{R}^n)}=0.
   	\]
   	Next, from \eqref{dL4}, together with using the fact $\mathcal{L}^{(l)}\Psi^{(l)} =0$ in $\Omega$,  we find
   	\[  
   	\langle \mathcal{L}^{(l)}\Psi^{(l)} , \varPhi^{(l)}\rangle_{ {H}^{-1}(E), H^{1}(E)}=0 \quad\mbox{ for all } \Psi^{(l)}  \mbox{ solving }\eqref{dL9}.
   	\]
   	\vspace{3pt} 
   	Since all possible $\{\mathcal{L}^{(l)}\Psi^{(l)} \,|_{E}\,:\,\Psi^{(l)}  \mbox{ solves }\eqref{dL9}\}$ is dense in $H^{-1}(E)$ (see \eqref{st1}). Therefore $\varPhi^{(l)}=0$ in $E$.  
   	
   	Since $\mathcal{L}^{(l)}\varPhi^{(l)}=0$ in $\mathbb{R}^n\setminus\overline{\Omega}$ (cf. \eqref{dL3}), so  $\varPhi^{(l)}=0$ in $E$ implies  $\varPhi^{(l)}=0$ in $\mathbb{R}^n\setminus\overline{\Omega}$,  thanks to the unique continuation property of the elliptic partial differential operator, see \cite{Wolff}.
   	
   	Therefore, we find $\varPhi^{(l)}\in H^1_0(\Omega)$ which solves  
   	$\mathcal{L}^{(l)}\varPhi^{(l)} = F\in \widetilde{H}^{-1}(\Omega)$, which implies $\partial_{\nu^{(l)}}\varPhi^{(l)}\,|_{\partial\Omega}$ is well-defined in $H^{-\frac{1}{2}}(\partial\Omega)$, and from above  $\varPhi^{(l)}=0$ in $\mathbb{R}^n\setminus\overline{\Omega}$,   we actually have $\partial_{\nu^{(l)}}\varPhi^{(l)}\,|_{\partial\Omega}=0$.  
   	
   	Thus \eqref{dL6} follows, as integration by-parts gives
   	\begin{align*}\langle F, \mathscr{V}^{(l)}\rangle_{ \widetilde{H}^{-1}(\Omega), H^{1}(\Omega)} &=  \langle  \mathcal{L}^{(l)}\mathscr{V}^{(l)}, \varPhi^{(l)}\rangle_{ \widetilde{H}^{-1}(\Omega), H^{1}(\Omega)}\\
   	&=0 \quad\mbox{ for all $\mathscr{V}^{(l)}\in \mathcal{S}^{(l)}(\Omega)$}.
   	\end{align*}
   	This completes the proof.  
   	\hfill\end{proof}
   
   Hence by reducing  our nonlocal inverse problem (cf. Subsection \ref{nlip}) into a local inverse problem (cf. Subsection \ref{lip}) we have completed the proof of Theorem \ref{thm1}.

\section{Gauge invariance in the nonlocal case}\label{sec4}

We address the following question here, does the nonlocal inverse problem (cf. (A)) exhibit the gauge invariance?   Let us discuss about it here.
		\subsection{Transformation optics}\label{top}
		
		In general, the idea of transformation optics or the invariance under the transformation,  has had appeared in the literature on studying ``non-uniqueness of Calder\'{o}n problem" and ``cloaking", see the papers \cite{GLU2,GLU, P}.	We refer to the survey paper \cite{GKLU} and the references there in for more details.
		
		Now we will be discussing these related issues in nonlocal settings.   
		
		\subsection*{Non-local case:}
		Let us talk about the ``transformation optics" or the change of variables techniques in particular for the heat equation, which have been used to  define the fractional operators.

		Let us consider a locally Lipschitz invertible map $\mathbb{F}:\R^n\mapsto\R^n$ such that $\mathbb{F}(x) = x$ for each $x\in \R^n\setminus B_\rho$, where $B_\rho=B(0;\rho)$ denotes an euclidean ball of radius $\rho$ centered at the origin.  Furthermore, assume that the associated Jacobians satisfy 
		\begin{equation}\label{jaco}
		{\rm det}(D\mathbb{F})(x),\, {\rm det}(D\mathbb{F}^{-1})(x) \ge C >0 \mbox{ for a.e. }x\in\R^n.
		\end{equation} 
		Then one has the following proposition known as transformation optics. 
		\begin{Proposition}\label{cov}
			$U$ is a solution to 
			\begin{equation}\label{U}
			\partial_t U = \nabla \cdot \Big( A(x) \nabla U \Big), \quad (x,t)\in \mathbb{R}^n\times (0,\infty)
			\end{equation}
			if and only if $V= U\circ \mathbb{F}^{-1}$ is a solution to
			\begin{equation}\label{V}
			\mathbb{F}_{*}1(y) \partial_t V = \nabla \cdot \Big( \mathbb{F}_{*}A(y) \nabla V \Big),  \quad (y,t)\in \mathbb{R}^n\times (0,\infty)
			\end{equation}
			where the coefficients are given as
			\begin{equation}\label{pff}
			\mathbb{F}_{*} 1(y) = \frac{1}{{\rm det }(D\mathbb{F})(x)},
			\quad 
			\mathbb{F}_{*}A(y) = \frac{D\mathbb{F}^{\top}(x) A(x) D\mathbb{F}(x)}{{\rm det }(D\mathbb{F})(x)}
			\end{equation}
			with the understanding that the right hand sides in \eqref{pff} are computed at $x=\mathbb{F}^{-1}(y)$. Moreover we have for all $t>0$,
			\begin{align}\label{eq:prop:change-variable-assertion}
			U(t,\cdot) = V(t,\cdot) \qquad \mbox{ in }\R^n\setminus B_\rho.
			\end{align}
		\end{Proposition}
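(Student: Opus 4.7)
The statement is purely a change-of-variables lemma, and the cleanest route is through the weak formulation of \eqref{U}, since the bi-Lipschitz bound \eqref{jaco} is exactly the regularity that is available. First I would test \eqref{U} against an arbitrary $\phi \in C_c^\infty(\mathbb{R}^n \times (0,\infty))$, yielding
\[
\int_0^\infty \int_{\mathbb{R}^n} \bigl[\partial_t U(x,t)\,\phi(x,t) + A(x) \nabla_x U(x,t) \cdot \nabla_x \phi(x,t)\bigr] \, dx\, dt = 0,
\]
and set $\psi(y,t) := \phi(\mathbb{F}^{-1}(y),t)$; because $\mathbb{F}$ is bi-Lipschitz, $\psi$ is a legitimate (Lipschitz, compactly supported in $t$) test function on the $y$-side, and the correspondence $\phi \leftrightarrow \psi$ is bijective.

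Next, I would perform the substitution $x = \mathbb{F}^{-1}(y)$ via the area formula for Lipschitz maps. The Jacobian $dx = dy/|\det D\mathbb{F}(\mathbb{F}^{-1}(y))|$ immediately turns the time term into $\int V_t(y,t)\,\psi(y,t)\,\mathbb{F}_*1(y)\,dy$, matching the first coefficient in \eqref{pff}. For the elliptic term, Rademacher's theorem and the chain rule applied to $U(x,t) = V(\mathbb{F}(x),t)$ and $\phi(x,t) = \psi(\mathbb{F}(x),t)$ give $\nabla_x U = (D\mathbb{F})^\top \nabla_y V$ and $\nabla_x \phi = (D\mathbb{F})^\top \nabla_y \psi$ almost everywhere; after the change of variable and absorption of the Jacobian factor, the integrand collapses into $\mathbb{F}_*A(y)\,\nabla_y V \cdot \nabla_y \psi$ as defined in \eqref{pff}. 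Since $\psi$ ranges over all admissible test functions as $\phi$ does, $V$ is a weak solution of \eqref{V}. The converse direction follows by replaying the argument with $\mathbb{F}^{-1}$ in place of $\mathbb{F}$, noting that $(\mathbb{F}^{-1})_*(\mathbb{F}_*A) = A$ and $(\mathbb{F}^{-1})_*(\mathbb{F}_*1) = 1$. The exterior identification \eqref{eq:prop:change-variable-assertion} is automatic, since on $\mathbb{R}^n \setminus B_\rho$ we have $\mathbb{F} = \Id$, hence $V(y,t) = U(\mathbb{F}^{-1}(y),t) = U(y,t)$ there.

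The one point that requires genuine care, and where the assumption \eqref{jaco} is essentially used, is the low regularity of $\mathbb{F}$: the computation above must be justified a.e., via Rademacher's theorem to define $D\mathbb{F}$ and the Lipschitz area formula for the change of measure. The two-sided Jacobian bound \eqref{jaco} is what guarantees that $V = U \circ \mathbb{F}^{-1}$ inherits the $H^1_{\mathrm{loc}}$ (spatial) regularity of $U$, and conversely, so that both sides of the transformed weak equation make sense simultaneously and the bijection of test functions is really an equivalence of weak formulations. Once these foundational points are in place, the algebra is mechanical.
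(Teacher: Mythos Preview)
Your proposal is correct and matches the paper's approach exactly: the paper does not spell out a proof but simply states that ``the above claim essentially follows from performing a change of variables in the weak formulation associated with the differential equation'' and cites \cite{GAV,CGHP} for the heat-equation case. You have written out precisely that argument, including the care needed at low (bi-Lipschitz) regularity via Rademacher and the area formula, which is the only point where the hypothesis \eqref{jaco} enters.
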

		The above claim essentially follows from performing a change of variables in the weak formulation associated with the differential equation. In particular for the heat equation it has been presented in \cite[Section 2]{GAV}, \cite{CGHP}.

		Let us seek those $\mathbb{F}$  locally Lipschitz invertible map $\mathbb{F}:\R^n\mapsto\R^n$ satisfying \eqref{jaco},  such that $\mathbb{F}(x) = x$ in $W\subset\mathbb{R}^n$ be some non-empty open set. 
		Let us call $U(\cdot,0)=u(\cdot)$ and $V(\cdot,0)=v(\cdot)$. By definition, $V= U\circ \mathbb{F}^{-1}$ so it gives $u=v$ in $W$.     
		Let us call (cf. \eqref{gm12})
		\[\mathcal{L}_A:=-\sum_{jk=1}^n \frac{\partial}{\partial x_j}a_{jk}(x)\frac{\partial}{\partial x_k}.\]
		Then we have
		\begin{align*}
		\forall x\in W, \quad \big(\mathcal{L}_A\big)^a u(x)&= \frac{1}{\Gamma(-a)}\int_0^\infty \frac{U(x,t)-u(x)}{t^{1+a}}\, dt\\[5pt]
		&= \frac{1}{\Gamma(-a)}\int_0^\infty \frac{V(x,t)-v(x)}{t^{1+a}}\, dt\\[5pt]
		&=  \big(\mathcal{L}_{\mathbb{F}_{*}A}\big)^a v(x).
		\end{align*}
		Hence 
		\begin{equation}\label{ecd}
		\Big(u(x), \big(\mathcal{L}_A\big)^a  u(x)\Big)\Big|_{W}=\Big(v(x), \big(\mathcal{L}_{\mathbb{F}_{*}A}\big)^a v(x)\Big)\Big|_{W}.
		\end{equation} 
		Let $\Omega\subset\mathbb{R}^n$ be some open set such that $\overline{\Omega}\cap \overline{W}=\emptyset$. Let $u$ satisfies $\big(\mathcal{L}_A\big)^a  u(x)=0$ in $\Omega$, then we find $v$ satisfies  
		$\big(\mathcal{L}_{\mathbb{F}_{*}A}\big)^a v(x)=0$ in $\Omega$. Since $\mathbb{F}$ is a diffeomorphism and $U=V\circ \mathbb{F}^{-1}$ thus
		\[
		\forall x\in \Omega, \quad 0=\big(\mathcal{L}_A\big)^a  u(x)= \frac{1}{\Gamma(-a)}\int_0^\infty \frac{U(x,t)-u(x)}{t^{1+a}}\, dt\]
		implies that,
		\[\forall x\in \Omega, \quad \big(\mathcal{L}_{\mathbb{F}_{*}A}\big)^a v(x)=\frac{1}{\Gamma(-a)}\int_0^\infty \frac{V(x,t)-v(x)}{t^{1+a}}\, dt=0.  
		\]
		This shows corresponding to two different matrices $A$ and $\mathbb{F}_{*}A$, the exterior Cauchy data  $\mathcal{C}^A_{(W, W)}=\left(u|_{W}, \big(\mathcal{L}_{A}\big)^a u|_{W}\right)$ and $ \mathcal{C}^{\mathbb{F}_{*}A}_{(W, W)}= \left(v|_{W}, \big(\mathcal{L}_{\mathbb{F}_{*}A}\big)^a v|_{W}\right)$  are same (cf. \eqref{ecd}) for the two sets of solutions 
		$\{u\in H^a(\mathbb{R}^n): \big(\mathcal{L}_{A}\big)^au=0 \mbox{ in } \Omega\}$ and $\{v\in H^a(\mathbb{R}^n): \big(\mathcal{L}_{\mathbb{F}_{*}A}\big)^a v=0 \mbox{ in } \Omega\}$ respectively, $0<a<1$.
		
		 Let us concise it in the theorem below. 	
		
		\begin{Theorem}\label{thm6}
		 Let $\Omega\subset \R^n$ be some bounded non-empty open set, and $W \subset\Omega_e$ be an another non-empty open set such that $\overline{\Omega}\cap\overline{W}=\emptyset$.  Let $\mathbb{F}:\R^n\mapsto\R^n$ be a locally Lipschitz, invertible map satisfying \eqref{jaco}, such that $\mathbb{F}(x) = x$ for each $x\in W$.
			Then for the two different matrices $A$ and $\mathbb{F}_{*}A$ satisfying \eqref{eq:ellipticity and symmetry condition}, the exterior Cauchy data are same i.e. $\mathcal{C}^A_{(W, W)}=\mathcal{C}^{\mathbb{F}_{*}A}_{(W, W)}$  for the two sets of solutions $\{u\in H^a(\R^n): \big(\mathcal{L}_{A}\big)^au=0 \mbox{ in } \Omega\}$ and $\{v\in H^a(\R^n): \big(\mathcal{L}_{\mathbb{F}_{*}A}\big)^a v=0 \mbox{ in } \Omega\}$ respectively, $0<a<1$.
		\end{Theorem}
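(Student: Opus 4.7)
The plan is to formalize the computation sketched in the paragraph preceding the theorem, built on the transformation rule of Proposition \ref{cov} and the pointwise heat-semigroup definition \eqref{gm14} of the fractional operator. Given an arbitrary $u \in H^a(\mathbb{R}^n)$ with $(\mathcal{L}_A)^a u = 0$ in $\Omega$, I first set $v := u \circ \mathbb{F}^{-1}$. Because $\mathbb{F}$ equals the identity on $W$, one has $v|_W = u|_W$, so the first components of the two Cauchy data automatically coincide on $W$.

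Next, I let $U(x,t)$ denote the heat-semigroup evolution of $u$ associated with $\mathcal{L}_A$ (cf. \eqref{kh}), and define $V(x,t) := U(\mathbb{F}^{-1}(x), t)$. By Proposition \ref{cov}, $V$ satisfies the transformed equation $\mathbb{F}_{*}1 \, \partial_t V = \nabla \cdot (\mathbb{F}_{*}A \nabla V)$ with $V(\cdot, 0) = v$, and moreover $U(x,t) = V(x,t)$ for $x \in W$ and all $t \geq 0$, since $\mathbb{F}$ is the identity there. Using the pointwise representation \eqref{gm14}, valid at any $x \in W$ because the decay of $U(x,t)-u(x)$ as $t \to 0^+$ and $t \to \infty$ makes the integral absolutely convergent, I compute for each $x \in W$:
\[
(\mathcal{L}_A)^a u(x) = \frac{1}{\Gamma(-a)} \int_0^\infty \frac{U(x,t)-u(x)}{t^{1+a}} \, dt = \frac{1}{\Gamma(-a)} \int_0^\infty \frac{V(x,t)-v(x)}{t^{1+a}} \, dt = (\mathcal{L}_{\mathbb{F}_{*}A})^a v(x).
\]
The same identity applied at $x \in \Omega$ shows that $v$ solves $(\mathcal{L}_{\mathbb{F}_{*}A})^a v = 0$ in $\Omega$, so the correspondence $u \mapsto v$ is a bijection between the two solution classes preserving the exterior Cauchy data on $W$. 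This yields $\mathcal{C}^A_{(W,W)} = \mathcal{C}^{\mathbb{F}_{*}A}_{(W,W)}$.

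The one delicate point is justifying the last equality in the display above: Proposition \ref{cov} produces a heat equation with density weight $\mathbb{F}_{*}1$, whereas $\mathcal{L}_{\mathbb{F}_{*}A}$ as defined in the paper is the unweighted divergence-form operator. The resolution is that the fractional power is being evaluated only at $x \in W$, where $\mathbb{F}$ is the identity and the Jacobian factor $\mathbb{F}_{*}1$ equals $1$, so the two semigroups (weighted and unweighted) agree as functions of $t$ at that point. In other words, the exterior Cauchy data on $W$ only sees semigroup values at points of $W$, and there the transformation optics is trivial, so the identification of $(\mathcal{L}_A)^a u|_W$ with $(\mathcal{L}_{\mathbb{F}_{*}A})^a v|_W$ goes through without having to globally reinterpret $\mathcal{L}_{\mathbb{F}_{*}A}$ as a weighted operator.
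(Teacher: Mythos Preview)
Your argument follows the same route as the paper: invoke Proposition~\ref{cov}, match $U$ and $V$ on $W$ via $\mathbb{F}|_W=\mathrm{Id}$, and read off equality of the fractional operators from the heat-semigroup formula~\eqref{gm14}. The paper's own proof is exactly this computation, carried out in the discussion immediately preceding the theorem statement.

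Where you go beyond the paper is in flagging the weight $\mathbb{F}_{*}1$. You are right to be suspicious, but your proposed resolution does not work. The value $e^{-t\mathcal{L}}v(x)$ at a fixed $x$ depends on the operator $\mathcal{L}$ \emph{globally}, not just on its coefficients at $x$. The function $V$ produced by Proposition~\ref{cov} is the semigroup of the weighted operator $\widetilde{\mathcal{L}}=-(\mathbb{F}_{*}1)^{-1}\nabla\cdot(\mathbb{F}_{*}A\,\nabla)$ (which is self-adjoint in the weighted $L^2$ space with density $\mathbb{F}_{*}1$), not of $\mathcal{L}_{\mathbb{F}_{*}A}=-\nabla\cdot(\mathbb{F}_{*}A\,\nabla)$. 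These two semigroups do not coincide at a point merely because $\mathbb{F}_{*}1(x)=1$ there; they agree everywhere only if $\mathbb{F}_{*}1\equiv 1$, i.e.\ if $\mathbb{F}$ is volume-preserving. Thus the identification
\[
\frac{1}{\Gamma(-a)}\int_0^\infty \frac{V(x,t)-v(x)}{t^{1+a}}\,dt \;=\; (\mathcal{L}_{\mathbb{F}_{*}A})^a v(x)
\]
is not justified by your argument. The paper writes this same equality without comment, so the subtlety is not addressed there either; you have correctly located the issue, but it is not closed by the local observation you propose.
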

	
	This settles the question that even in nonlocal case we can not except the full recovery, it always possesses with the change of variable invariance.

		\end{document}